\crefname{hypothesis}{Hypothesis}{Hypotheses}
\title{An Implicitly Restarted Joint Bidiagonalization Algorithm for Large GSVD Computations\thanks{Submitted to the editors DATE.
\funding{This work was supported in part by the National Science Foundation of China (NSFC) under
grant 12171273.}}}
\author{Kaixiao Fang\thanks{Department of Mathematical Sciences, Tsinghua University, 100084 Beijing, People's Republic of China.
  (\email{fkx20@mails.tsinghua.edu.cn}).}
\and Zhongxiao Jia\thanks{The corresponding author.
Department of Mathematical Sciences, Tsinghua University, 100084 Beijing, People's Republic of China.
  (\email{jiazx@tsinghua.edu.cn}).}}
\begin{document}
\sloppy 

\maketitle

\begin{abstract}
    The joint bidiagonalization (JBD) process of a regular matrix pair $\{A,L\}$
    is mathematically equivalent to two simultaneous Lanczos bidiagonalization
    processes of
    the upper and lower parts of the Q-factor of QR factorization of the stacked matrix $(A^{\mathrm T},\,L^{\mathrm T})^{\mathrm T}$ when their starting
    vectors are closely related in a specific way. The resulting JBD method for computing extreme generalized singular values and corresponding left and right singular vectors of $\{A,L\}$ realizes the standard Rayleigh--Ritz projection
    of the generalized singular value decomposition (GSVD) problem of $\{A,L\}$ onto the
    two left and one right subspaces generated by the JBD process.  In this paper, the implicit
    restarting technique is nontrivially and skillfully extended to the JBD process, and an implicitly
    restarted JBD (IRJBD) algorithm is developed with proper selection of crucial shifts proposed and a few key implementation details addressed in finite precision arithmetic.
    Compact upper bounds are established for the residual norm of an approximate
    GSVD component in both exact and finite precision arithmetic, 
    which are used to design efficient and reliable stopping criteria and avoid the expensive computation of approximate right generalized singular vectors. 
    Numerical experiments illustrate that
    IRJBD performs well and is more efficient than the thick-restart JBD algorithm.
\end{abstract}

\begin{keywords}
GSVD, generalized singular value, generalized singular vector, Lanczos bidiagonalization, the JBD process,
the JBD method, implicit restarting, exact shifts
\end{keywords}

\begin{MSCcodes}
65F15, 15A18, 15A12, 65F10, 65F50
\end{MSCcodes}

\section{Introduction}
Let $A \in \mathbb{R}^{m \times n}$ and $L \in \mathbb{R}^{p \times n}$ with $m+p\geq n$, and suppose the matrix pair $\{A, L\}$ is regular, i.e. $\operatorname{rank}((A^{\mathrm T},\,L^{\mathrm T})^{\mathrm T})=n$, where the superscript T denotes the transpose of a matrix or vector. Denote $q_1=\operatorname{dim}(\mathcal{N}(A))$, $q_2=\operatorname{dim}(\mathcal{N}(L))$, $q=n-q_1-q_2$, $l_1=\operatorname{dim}(\mathcal{N}(A^{\mathrm T}))$ and $l_2=\operatorname{dim}(\mathcal{N}(L^{\mathrm T}))$, where $\mathcal{N}(\cdot)$ is the null space of a matrix. Then according to \cite{Huang-Numerical-experiments} and \cite{PaigeandSaunders1981}, the generalized singular
value decomposition (GSVD) of $\{A, L\}$ is
\begin{equation}
\label{GSVD in matrix terms}
\left\{\begin{array}{l}
P_A^{\mathrm T} A X=C_A=\operatorname{diag}\left\{C, 0_{l_1, q_1}, I_{q_2}\right\}, \\
P_L^{\mathrm T} L X=S_L=\operatorname{diag}\left\{S, I_{q_1}, 0_{l_2, q_2}\right\},
\end{array}\right.
\end{equation}
where $P_A$ and $P_L$ are orthogonal, $X$ is nonsingular, $C=\operatorname{diag}(c_1, c_2, \ldots, c_q)$ and
$S=\operatorname{diag}(s_1, s_2, \ldots, s_q)$ with $0<c_i, s_i<1$, and $c_i^2+s_i^2=1$, and the subscripts
in the matrices $0$ and $I$ represent their row and columns numbers; see \cite{GolubandVan2013matrix} for more
details.

Partition $P_A,\ P_L$ and $X$ into
\begin{equation} \label{partition}
P_A=(P_{A,q},P_{A,l_1},P_{A,q_2}),\
P_L=(P_{L,q},P_{L,q_1},P_{L,l_2}),\
X=(X_q,X_{q_1},X_{q_2}),
\end{equation}
where the lowercase letters in subscripts
represent the column numbers of the matrices. Write $X_q=(x_1,\ldots,x_q)$, $P_{A,q}=(p_1^A, \ldots,
p_q^A)$ and $P_{L,q}=(p_1^L, \ldots, p_q^L)$. We call the
quintuples $(c_i,\,s_i,\,x_i,\,p_i^A,\,p_i^L),\ i=1,\ldots,q$, the
nontrival GSVD components of $\{A,L\}$ with the generalized singular values $c_i/s_i$ or
$\{c_i,s_i\}$, the left generalized
singular vectors $p_i^A$, $p_i^L$ and the right generalized
singular vectors $x_i$, and $(0_{l_1,q_1},I_{q_1},P_{A,l_1},P_{L,q_1},X_{q_1})$ and
$(I_{q_2},0_{l_2,q_2},P_{A,q_2},P_{L,l_2},X_{q_2})$ the trivial GSVD components (parts)
corresponding to the $q_1$ zero and $q_2$ infinite generalized singular values,
denoted by the pairs $\{c,s\}=\{0,1\}$ and  $\{c,s\}=\{1,0\}$, respectively.
The nontrivial part of GSVD in
\eqref{GSVD in matrix terms} can be written in the vector form
\begin{equation}
\label{GSVD in vector terms}
    \left\{\begin{array}{rl}
    A x_i & =c_i p_i^A, \\
    L x_i & =s_i p_i^L, \\
    s_i A^{\mathrm{T}} p_i^A & =c_i L^{\mathrm T} p_i^L. 
    \end{array} \quad i=1,\,2,\dots, q,\right.
\end{equation}

A number of methods have been available for the computation of several selected nontrivial
GSVD components. The Jacobi--Davidson type GSVD
method (JDGSVD) proposed by Hochstenbach \cite{Hochstenbach2009JDGSVD} computes several GSVD components
with the generalized singular values closest to a given target $\tau$. This method is suitable for
the case where $A$ or $L$ has
full column rank. It transforms the GSVD problem of $\{A,\,L\}$ into the generalized
eigenvalue problem of the augmented and cross-product matrix pair $\left\{\left[\begin{array}{cc}
0 & A \\
A^{\mathrm T} & 0
\end{array}\right],\,\left[\begin{array}{cc}
I & 0 \\
0 & L^{\mathrm T} L
\end{array}\right]\right\}$ when $L$ has full column rank (or $\left\{\left[\begin{array}{cc}
0 & L \\
L^{\mathrm T} & 0
\end{array}\right],\,\left[\begin{array}{cc}
I & 0 \\
0 & A^{\mathrm T} A
\end{array}\right]\right\}$ when $A$ has full column rank), and recovers GSVD components from the
computed generalized eigenpairs of the augmented and cross-product matrix pair.
Huang and Jia \cite{Huang-Numerical-experiments} have recently proposed a cross-product-free JDGSVD
method (CPF-JDGSVD) to compute several generalized singular values closest to a target point $\tau$ and the
corresponding generalized singular vectors. The CPF-JDGSVD method works on $A$ and $L$ directly and does not explicitly involve $A^{\mathrm T}A$
and $L^{\mathrm T}L$ in the extraction stage; it constructs the two left subspaces by multiplying the right
subspace by $A$ and $L$, respectively, and projects the GSVD problem of $\left\{A,\,L\right\}$ onto them.
The method is mathematically equivalent to implicitly implementing the standard Rayleigh--Ritz projection
of the
generalized eigenvalue problem of the matrix pair $\{A^{\mathrm T}A,\,L^{\mathrm
T}L\}$ onto the right subspace. In the subspace expansion stage, at each outer iteration one needs to iteratively solve an $n\times n$ correction equation, called inner iterations. The CPF-JDGSVD computes GSVD components more accurately than JDGSVD
\cite{Hochstenbach2009JDGSVD} does.  Huang and Jia \cite{HuangandJia2022harmonicJDGSVD} have also presented
two JDGSVD methods based on harmonic extraction approaches, which are more efficient for computing interior
GSVD components than the CPF-JDGSVD method; furthermore, they have developed
refined and refined harmonic JD algorithms \cite{HuangandJia2024RHJD}, which converge more smoothly and outperform the standard and harmonic JD
algorithms in \cite{Huang-Numerical-experiments,HuangandJia2022harmonicJDGSVD}, respectively, when computing
exterior and interior GSVD components. Jia and Zhang \cite{JiaandZhang2023FEASTGSVD} exploit the
Chebyshev--Jackson series expansion to propose a CJ-FEAST GSVDsolver for computing all the generalized
singular values in a given interval and the corresponding left and right generalized singular vectors.

In this paper we are concerned with the problem of computing several extreme
GSVD components corresponding to the nontrivial
largest or smallest generalized singular values. Such kind of problem
arises from many applications, e.g., general-form regularized solutions of linear discrete ill-posed problems
\cite{Rank-Deficient-and-Discrete-Ill-Posed-Problems}, principal component analysis in statistics \cite{PCA},
pattern recognition \cite{model-identification}, and signal processing \cite{Signal-Analysis}. The joint
bidiagonalization (JBD) method proposed by Zha \cite{Zha-JBD} can serve this purpose.
It makes use of the JBD
process to jointly bidiagonalize $\{A,\,L\}$ to upper bidiagonal forms, generates two explicit
left Krylov subspaces and one implicit right subspace that are closely related to certain Krylov
subspaces, and realizes the standard Rayleigh--Ritz projection \cite{Huang-Numerical-experiments,JiaandZhang2023FEASTGSVD} of the GSVD problem of $\{A,\,L\}$
onto the generated left and right subspaces to obtain approximations of extreme GSVD components.
Kilmer, Hansen and Español \cite{Kilmer-JBD}
adapt the JBD process to joint lower and upper bidiagonalizations of $\{A,\,L\}$,
and then exploit the process to solve linear discrete ill-posed problems with general-form Tikhonov regularization.
Jia and Yang \cite{JIAandYang2020JBDforTikhonov} make use of this JBD process to solve large-scale linear
discrete ill-posed problems with the general-form regularization, and propose an effective but
more efficient and robust
pure iterative solver without explicit Tikhonov regularization.

As we will see, the JBD process is mathematically equivalent to joint Lanczos lower and upper 
bidiagonalizations when their starting vectors are chosen in some special way, and the JBD method for
computing extreme, i.e., largest or smallest, GSVD components of $\{A,L\}$
amounts to the Lanczos bidiagonalization method for computing extreme SVD triplets of
certain two closely related single matrices resulting from $\{A,L\}$. As a result, the JBD method inherits
the convergence properties of Lanczos bidiagonalization method. For the computation of
extreme GSVD components, the subspace dimension needed may be large in order to ensure convergence.
For the large $\{A,L\}$, however, the subspace dimension must be
limited and cannot exceed some maximum allowed due to the storage and/or computational cost.
Therefore, for practical purpose, it is generally necessary to restart the JBD method for
a given maximum dimension by selecting
better initial vectors to generate increasingly better left and right subspaces. Proceed in such a way until a restarted JBD algorithm converges for a prescribed
stopping tolerance.

Sorensen \cite{implicit-restarted-Arnoldi} proposes an implicit restarting technique
for the Arnoldi method that solves large eigenvalue problems,
which can save computational cost considerably
compared to an explicitly restarted Arnoldi method at each restart for the same subspace dimension
and, meanwhile, maintains numerical stability. This technique is applicable or adaptable
to numerous Krylov subspace methodsd in a
variety of contexts, and has been widely used, e.g., the Lanczos method \cite{implicit-restart-Lanczos} and the block Lanczos method
\cite{implicit-restart-block-Lanczos} for symmetric eigenvalue problems, the refined Arnoldi method
\cite{Jia-poly-refined} and the refined harmonic Arnoldi method \cite{Jia-refined-harmonic-Arnoldi}
for large eigenvalue problems,
the Lanczos bidiagonalization method \cite{Jia-IRRBL,implicit-restart-bidiag} and the corresponding refined and harmonic
refined methods \cite{Jia-IRRBL,Jia-IRRHLB}  for
SVD problems. One of the keys for the success of an implicitly restarted
algorithm critically rely on proper selection of shifts involved. Each of these methods falls into one
of the standard extraction-, harmonic extraction-, refined extraction- and refined harmonic extraction-based Rayleigh--Ritz projections, and the best possible
shifts depend on the chosen extraction approach \cite{Jia-poly-refined,Jia-refined-harmonic-Arnoldi,Jia-IRRBL,implicit-restart-bidiag}.

For the JBD method, Alvarruiz, Campos and Roman \cite{TRJBD} have developed a
thick-restarted JBD (TRJBD) algorithm,
which will be later reviewed in this paper. In this paper, we extend the implicit restarting technique nontrivially and skillfully to the JBD method, and develop an implicitly restarted JBD (IRJBD) algorithm. We consider selection strategies of crucial shifts, and propose exact shifts that are optimal within the
framework of the JBD method
in some sense. We establish the relationship between the initial vector after and before implicit restart. Being directed against the disadvantage that the residual in \cite{Zha-JBD} is defined for the generalized eigenvalue problem of the matrix pair $\left\{A^{\mathrm T}A,\,L^{\mathrm T}L\right\}$ and misses the approximate left generalized singular vectors of $\left\{A,\,L\right\}$, we define a new and general residual that takes approximate left and right generalized singular vectors into consideration. We establish compact upper bounds for the residual norm in exact and finite precision arithmetic, respectively, and use them to design efficient and reliable stopping criteria, which avoid the expensive but unnecessary computation of approximate right generalized singular vectors before the convergence,
each of which requires accurate solution of a large least squares
problem with the coefficient matrix $(A^{\mathrm T},\,L^{\mathrm T})^{\mathrm T}$. Notably,
we prove the intrinsic property that the JBD method cannot compute zero and infinite trivial
GSVD components.

In section 2, we review the Lanczos lower and upper bidiagonalization processes. In section 3, we review the JBD process and the JBD method, and shed light on the connection between the JBD process and Lanczos bidiagonalizations. We present a general definition of residual of an approximate GSVD component, and derive compact bounds for the residual norm without involving approximate left and right generalized singular vectors
in exact and finite precision arithmetic, respectively. In section 4, we show how to implicitly restart the JBD process. 
Section 5 devotes to the selection of crucial shifts involved. Section 6 summarizes the IRJBD algorithm. Section 7 reports numerical experiments to illustrate the performance of IRJBD and its higher efficiency than TRJBD,
and section 8 concludes the paper.

Through the paper, 
denote by $\mathcal{K}_k(M,v)=\operatorname{span}(v,Mv,\dots,M^{k-1}v)$ the $k$-dimensional Krylov subspace
generated by a matrix $M$ and a unit length vector $v$, by $\epsilon$ the unit roundoff, and by
$e_i$ is the $i$-th column of the identity matrix $I_k$ of order $k$.

\section{Lanczos lower and upper bidiagonalizations}
Let the economy QR factorization of $\left\{A, L\right\}$ be
\begin{equation}
\label{QR}
    \left(\begin{array}{l}
    A \\
    L
    \end{array}\right)=Q R=\left(\begin{array}{l}
    Q_A \\
    Q_L
\end{array}\right) R
\end{equation}
with $Q_A \in \mathbb{R}^{m \times n}$ and $Q_L \in \mathbb{R}^{p \times n}$. Then
GSVD (\ref{GSVD in matrix terms}) is expressed by the CS decomposition (cf. \cite{GolubandVan2013matrix})
\begin{equation}
\label{CS of QA and QL}
Q_A=P_AC_AW^{\mathrm T},\quad Q_L=P_LS_LW^{\mathrm T}
\end{equation}
of the column orthonormal matrix $(Q_A^{\mathrm T},\,Q_L^{\mathrm T})^{\mathrm T}$ and
$$
X=R^{-1}W.
$$
Applying the $k$-step Lanczos lower and upper bidiagonalization processes \cite{Bjorck1996LeastSquares,LSQR}
to $Q_A$ and $Q_L$ with the starting vectors $u_1$ and $\hat v_1$, respectively, we obtain
\begin{equation}
\label{Lanczos bidiagonalization}
    \begin{aligned}
    & Q_A V_k=U_{k+1} B_k, & & Q_A^{\mathrm T} U_{k+1}=V_k B_k^{\mathrm T}+\alpha_{k+1} v_{k+1} e_{k+1}^{\mathrm T}, \\
    & Q_L \widehat{V}_k=\widehat{U}_k \widehat{B}_k, & & Q_L^{\mathrm T} \widehat{U}_k=\widehat{V}_k \widehat{B}_k^{\mathrm T}+\hat{\beta}_k \hat{v}_{k+1} e_k^{\mathrm T},
    \end{aligned}
\end{equation}
where
$$
\resizebox{\textwidth}{!}{$
B_k=\left(\begin{array}{cccc}
\alpha_1 & & & \\
\beta_2 & \alpha_2 & & \\
& \beta_3 & \ddots & \\
& & \ddots & \alpha_k \\
& & & \beta_{k+1}
\end{array}\right) \in \mathbb{R}^{(k+1) \times k}, \quad\widehat{B}_k=\left(\begin{array}{cccc}
\hat{\alpha}_1 & \hat{\beta}_1 & & \\
& \hat{\alpha}_2 & \ddots & \\
& & \ddots & \hat{\beta}_{k-1} \\
& & & \hat{\alpha}_k
\end{array}\right) \in \mathbb{R}^{k \times k},
$}
$$
$$
U_{k+1}=\left(u_1, \ldots, u_{k+1}\right) \in \mathbb{R}^{m \times(k+1)}, \quad V_k=\left(v_1, \ldots, v_k\right) \in \mathbb{R}^{n \times k},
$$
$$
\widehat{U}_k=\left(\hat{u}_1, \ldots, \hat{u}_k\right) \in \mathbb{R}^{p \times k}, \quad \widehat{V}_k=\left(\hat{v}_1, \ldots, \hat{v}_k\right) \in \mathbb{R}^{n \times k}
$$
with $U_{k+1},\,V_k,\,\widehat{U}_k$ and $\widehat{V}_k$ being column orthonormal, and $v_1=\frac{Q_A^{\mathrm{T}}u_1}{\left\|Q_A^{\mathrm{T}}u_1\right\|}$.

If $\hat{v}_1:=v_1$, 
then it is proved in \cite{Kilmer-JBD,Zha-JBD} that
\begin{equation}
\label{relation of v and vhat}
\hat{v}_{i+1}=(-1)^i v_{i+1}, \quad \hat{\alpha}_i \hat{\beta}_i=\alpha_{i+1} \beta_{i+1}, \quad i=1,2,\dots,k-1.
\end{equation}
From (\ref{Lanczos bidiagonalization}) we have
\begin{equation}
\label{relation to Lanczos}
\resizebox{0.9\textwidth}{!}{$
\begin{aligned}
&Q_A^{\mathrm T} Q_A V_k=V_k B_k^{\mathrm T} B_k+\alpha_{k+1} \beta_{k+1} v_{k+1} e_k^{\mathrm T},&&Q_A Q_A^{\mathrm T} U_{k+1} =U_{k+1} B_k B_k^{\mathrm T}+\alpha_{k+1} Q_A v_{k+1} e_{k+1}^{\mathrm T}, \\
&Q_L^{\mathrm T} Q_L \widehat V_k=\widehat{V}_k \widehat{B}_k^{\mathrm T} \widehat{B}_k+\hat\alpha_k \hat\beta_k \hat{v}_{k+1} e_k^{\mathrm T},&&Q_L Q_L^{\mathrm T} \widehat{U}_k =\widehat{U}_k \widehat{B}_k \widehat{B}_k^{\mathrm T}+\hat{\beta}_k Q_L \hat{v}_{k+1}e_k^{\mathrm T}. \\
\end{aligned}
$}
\end{equation}
The first and third relations in (\ref{relation to Lanczos}) correspond to the symmetric Lanczos processes of $Q_A^{\mathrm{T}}Q_A$ and $Q_L^{\mathrm{T}}Q_L$ with the starting vectors $v_1$ and $\hat v_1$, respectively \cite{Bjorck1996LeastSquares,Parlett1980symmetric}. The second and fourth ones will be used to establish the expressions of the updated starting vectors of the Lanczos lower and upper bidiagonalization processes after implicit restart.

Notice from \eqref{relation of v and vhat} and \eqref{relation to Lanczos} that the subspaces $\operatorname{span}(V_k)=\operatorname{span}(\widehat{V}_k)=\mathcal{K}_k(Q_A^{\rm T}Q_A,v_1)=\mathcal{K}_k(Q_L^{\rm T}Q_L,v_1)$, ${\rm span}(U_{k+1})=\mathcal{K}_{k+1}(Q_AQ_A^{\rm T},u_1)$
and  ${\rm span}(\widehat{U}_k)=\mathcal{K}_k(Q_LQ_L^{\rm T},\hat{u}_1)$ .
It is known from (\ref{Lanczos bidiagonalization}) that $B_k=U_{k+1}^{\mathrm T}Q_AV_k$ is the Ritz--Galerkin projection matrix of $Q_A$ on the left subspace $\operatorname{span}(U_{k+1})$ and the right subspace $\operatorname{span}(V_k)$, and $\widehat B_k=\widehat{U}_k^{\mathrm T}Q_L\widehat{V}_k$ is the Ritz--Galerkin projection matrix of $Q_L$ on the left subspace $\operatorname{span}(\widehat U_k)$ and the right subspace $\operatorname{span}(V_k)$. Therefore, the extreme singular values of $Q_A$ and $Q_L$ can be approximated by those of $B_k$ and $\widehat B_k$ \cite{Bjorck1996LeastSquares,Parlett1980symmetric}.

\section{The JBD process and the JBD method}
\subsection{The JBD process}
For $A$ and $L$ large, the QR factorization (\ref{QR}) is generally unaffordable due to
its excessive storage and expensive computational cost.
Thus, we suppose $Q$ and $R$ are not available in practical computation. For $\hat v_1=v_1$, the Lanczos bidiagonalization processes in (\ref{Lanczos bidiagonalization}) can be realized by the JBD process \cite{Jia-JBD-finite-precision,Kilmer-JBD}, described in Algorithm \ref{alg:JBD}.

\begin{algorithm}[H]
	\renewcommand{\algorithmicrequire}{\textbf{Input:}}
	\renewcommand{\algorithmicensure}{\textbf{Output:}}
	\caption{The $k$-step JBD process of $\left\{A,L\right\}$}
	\label{alg:JBD}
	\begin{algorithmic}[1]
	    \REQUIRE $u_1 \in \mathbb{R}^m$ with $\|u_1\|=1$
        \STATE $\alpha_1 v_1^{\prime}=QQ^{\mathrm{T}}\left(\begin{array}{c}
        u_{1} \\
        0_p
        \end{array}\right)$
        \STATE $\hat{\alpha}_1 \hat{u}_1=v_1^{\prime}(m+1: m+p)$
        \FOR{$i=1,2,...,k$}
		\STATE $\beta_{i+1} u_{i+1}=v_i^{\prime}(1: m)-\alpha_i u_i$
            \label{JBD inner start}
        \STATE $\alpha_{i+1} v_{i+1}^{\prime}=QQ^{\mathrm{T}}\left(\begin{array}{c}
        u_{i+1} \\
        0_p
        \end{array}\right)-\beta_{i+1}v_i^{\prime}$
        \STATE $\hat{\beta}_i=\left(\alpha_{i+1} \beta_{i+1}\right)/\hat{\alpha}_i$
        \STATE $\hat{\alpha}_{i+1} \hat{u}_{i+1}=(-1)^i v_{i+1}^{\prime}(m+1: m+p)-\hat{\beta}_i \hat{u}_i$
        \label{JBD inner end}
	    \ENDFOR
	\end{algorithmic}
\end{algorithm}

Notice that $QQ^{\mathrm{T}}(u_i^{\mathrm{T}},\,0_p^{\mathrm{T}})^{\mathrm{T}}$ in steps 1 and 5 are the orthogonal projections of $(u_i^{\mathrm{T}},\,0_p^{\mathrm{T}})^{\mathrm{T}}$, $i=1,2,\dots,k+1$ onto the column space of $(A^{\mathrm{T}},\,L^{\mathrm{T}})^{\mathrm{T}}$. Therefore,
$$
QQ^{\mathrm{T}}(u_i^{\mathrm{T}},\,0_p^{\mathrm{T}})^{\mathrm{T}}=(A^{\mathrm{T}},\,L^{\mathrm{T}})^{\mathrm{T}}\tilde x_i,
$$
where the vectors $\tilde x_i$ solve the large least squares problems:
\begin{equation}
\label{least-squares}
\tilde x_i=\arg \min \limits_{x \in \mathbb{R}^n}\left\|\left(\begin{array}{l}A \\ L\end{array}\right) x-\left(\begin{array}{l}u_i \\ 0_p\end{array}\right)\right\|,\quad i=1,\dots,k+1,
\end{equation}
which are supposed to be computed iteratively by the commonly used LSQR algorithm \cite{LSQR}. In such a way, we avoid the QR factorization (\ref{QR}) in Algorithm \ref{alg:JBD}.

Suppose the least squares problems in Algorithm \ref{alg:JBD} are solved accurately. The JBD process can be written in the matrix form
\begin{equation}
\label{JBD}
    \begin{aligned}
    & \left(I_m, 0_{m,p}\right) V_k^{\prime}=U_{k+1} B_k, \\
    & Q Q^{\mathrm T}\left(\begin{array}{c}
    U_{k+1} \\
    0_{p,k+1}
    \end{array}\right)=V_k^{\prime} B_k^{\mathrm T}+\alpha_{k+1} v_{k+1}^{\prime} e_{k+1}^{\mathrm T}, \\
    & \left(0_{p,m}, I_p\right) V_k^{\prime} D_k=\widehat{U}_k \widehat{B}_k,
    \end{aligned}
\end{equation}
where $V_k^{\prime}=QV_k$ and $D_k=\operatorname{diag}\left(1,-1, \ldots,(-1)^{k-1}\right) \in \mathbb{R}^{k \times k}$.
Let 
\begin{equation}\label{bkbar}
H_k=R^{-1} V_k=\left(h_1, \ldots, h_k\right),\ \ \bar{B}_k=\widehat{B}_k D_k.
\end{equation}
Then
\begin{equation}
\label{BTB+barBTbarB=I}
\begin{aligned}
&A H_k=U_{k+1} B_k,\quad LH_k=\widehat{U}_k \bar{B}_k,\\
&B_k^{\mathrm T} B_k+\bar{B}_k^{\mathrm T} \bar{B}_k=I_k.
\end{aligned}
\end{equation}

\subsection{The JBD method}\label{sec:JBD method}

The JBD process \eqref{AH=UB} generates two explicit left Krylov subspaces
${\rm span}(U_{k+1}),\ {\rm span}(\widehat{U}_k)$
and one implicit right Krylov subspace ${\rm span}(H_k)=R^{-1}{\rm span}(V_k)$ as
$H_k=R^{-1}V_k$ with $R$ unknown in computation,
which are exploited to compute some approximate GSVD
components of $\{A,\,L\}$.
Let the singular values of $B_k$ and $\bar B_k$ be $\{c_i^{(k)}\}_{i=1}^k$ and $\{s_i^{(k)}\}_{i=1}^k$, respectively, with $\{c_i^{(k)}\}_{i=1}^k$ labeled in decreasing order and $\{s_i^{(k)}\}_{i=1}^k$ in increasing order. Identity (\ref{BTB+barBTbarB=I}) shows
$$
(c_i^{(k)})^2+(s_i^{(k)})^2=1,\quad i=1,\dots,k.
$$
Suppose Algorithm \ref{alg:JBD} does not break down before step $k$. Then the subdiagonal elements $\left\{\alpha_i\beta_i\right\}_{i=2}^{k}$ of $B_{k}^{\mathrm T}B_{k}$ are not equal to 0, meaning that $B_{k}^{\mathrm T}B_{k}$ is an unreduced symmetric tridiagonal matrix. By the eigenvalue strict interlacing theorem \cite[pp. 203]{Parlett1980symmetric}, the eigenvalues of $B_{k-1}^{\mathrm T}B_{k-1}$ and $B_{k}^{\mathrm T}B_{k}$ satisfy
$$
(c_1^{(k)})^2>(c_1^{(k-1)})^2>(c_2^{(k)})^2>\cdots>(c_{k-1}^{(k)})^2>(c_{k-1}^{(k-1)})^2>(c_{k}^{(k)})^2.
$$
Thus, the singular values $\{c_i^{(k)}\}_{i=1}^k$ of $B_k$ are distinct, and so are $\{s_i^{(k)}\}_{i=1}^k$. Moreover, since $B_k$ and $\bar B_k$ have full column ranks, we have $0<c_i^{(k)},\,s_i^{(k)}<1$ for $i=1,2,\dots,k$.

Let the SVDs of $B_k$ and $\bar B_k$ or, equivalently, the GSVD of $\{B_k,\,\bar B_k\}$ be
\begin{equation}
\label{GSVD of Bk and Bkbar}
\begin{aligned}
    B_k=P_k \Theta_k W_k^{\mathrm T}, \quad \Theta_k=\operatorname{diag}\left(c_1^{(k)}, \ldots, c_k^{(k)}\right), \quad 1> c_1^{(k)}>\cdots> c_k^{(k)}> 0,\\
    \bar{B}_k=\bar{P}_k \Psi_k W_k^{\mathrm T}, \quad \Psi_k=\operatorname{diag}\left({s}_1^{(k)}, \ldots, {s}_k^{(k)}\right), \quad 0< s_1^{(k)}<\cdots<s_k^{(k)}< 1,
\end{aligned}
\end{equation}
where $P_k$ is orthonormal, $\bar P_k$ and $W_k$ are orthogonal. Then the JBD method computes
$k$ approximate generalized singular values $\{c_i^{(k)},s_i^{(k)}\}$, called the Ritz values,
and right generalized singular vectors $x_i^{(k)}=H_k w_i^{(k)}=R^{-1} V_k w_i^{(k)}$ and left generalized singular vectors $y_i^{(k)}=U_{k+1} p_i^{(k)}$ and $z_i^{(k)}=\widehat{U}_k \bar{p}_i^{(k)}$ associated with $A$ and $L$, called the right and left Ritz vectors, respectively.
It follows from the convergence theory of Lanczos bidiagonaliztion method on
$Q_A$ and $Q_L$ \cite{Bjorck1996LeastSquares} and is further known from \cite{Jia-JBD-finite-precision,JIAandYang2020JBDforTikhonov} that the JBD method generally favors extreme GSVD components of $\{A,L\}$.

Notice from (\ref{JBD}) that
\begin{equation}
\label{calculate x_i by lsqr}
\left(\begin{array}{c}
A \\
L
\end{array}\right) x_i^{(k)}=Q R R^{-1} V_k w_i^{(k)}=V^{\prime}_k w_i^{(k)}.
\end{equation}
Therefore, we obtain $x_i^{(k)}$ by solving this consistent rectangular
linear system using the LSQR algorithm.
In the analysis, we suppose that this system is solved accurately.

Remarkably, if $\{A,\,L\}$ has trivial zero or infinite GSVD components, then one of the approximate left generalized singular vectors never converges, as shown below.

\begin{theorem}
\label{Th:left singular vector corresponding to zero and infinity}
Suppose \Cref{alg:JBD} does not break down before step $k$, and let
$P_{A,l_1}$ and $P_{L,l_2}$ be defined as in \eqref{partition}.
\begin{enumerate}
\item If $\{A,\,L\}$ has a zero generalized singular value and the unit-length $p_0\in \operatorname{span}(P_{A,l_1})$ is
any left generalized singular vector associated with $A$ corresponding to the zero generalized singular
value, then
\begin{equation}\label{distzero}
\sin\angle(p_0,\operatorname{span}(U_{k+1}))=\sin\angle(p_0,\pm u_1).
\end{equation}

\item If $\{A,\,L\}$ has an infinite generalized singular value and the unit-length $p_{\infty}\in \operatorname{span}(P_{L,l_2})$
is any left generalized singular vector associated with $L$ corresponding to the infinite generalized singular value, then
\begin{equation}\label{distinf}
\sin\angle(p_{\infty},\operatorname{span}(\widehat U_{k}))=\sin\angle(p_{\infty},\pm \hat u_1).
\end{equation}
\end{enumerate}
\end{theorem}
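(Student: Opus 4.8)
The identity \eqref{distzero} is equivalent to $\|U_{k+1}^{\mathrm T}p_0\|=|u_1^{\mathrm T}p_0|$: since $U_{k+1}$ is column orthonormal, $\cos\angle(p_0,\operatorname{span}(U_{k+1}))=\|U_{k+1}^{\mathrm T}p_0\|$ and $\cos\angle(p_0,\pm u_1)=|u_1^{\mathrm T}p_0|$. So the content is that, beyond the component it already carries along $u_1$, the JBD process never makes progress toward $p_0$. The plan is to deduce this from one structural fact, $p_0\in\mathcal N(Q_A^{\mathrm T})$, together with the Lanczos bidiagonalization relations of \eqref{Lanczos bidiagonalization}--\eqref{JBD}; Part 2 will then follow by the symmetric argument with $Q_L$ and $\widehat U_k$.

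First I would locate $p_0$. Partitioning the CS decomposition \eqref{CS of QA and QL}, $Q_A=P_AC_AW^{\mathrm T}$, conformally with \eqref{partition}, the columns of $P_{A,l_1}$ are exactly those multiplied by the zero block $0_{l_1,q_1}$ of $C_A$; hence $Q_A^{\mathrm T}p=WC_A^{\mathrm T}(P_A^{\mathrm T}p)=0$ for every $p\in\operatorname{span}(P_{A,l_1})$, and in particular $Q_A^{\mathrm T}p_0=0$ (equivalently $p_0\in\mathcal N(A^{\mathrm T})$, using $A=Q_AR$ with $R$ nonsingular by regularity of $\{A,L\}$). The same computation with $S_L$ and $P_{L,l_2}$ gives $Q_L^{\mathrm T}p_\infty=0$ for Part 2.

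Next I would feed this into the lower-bidiagonalization identity $Q_AV_k=U_{k+1}B_k$. Left-multiplying by $p_0^{\mathrm T}$ and using $Q_A^{\mathrm T}p_0=0$ (so $p_0^{\mathrm T}Q_AV_k=0$) yields $p_0^{\mathrm T}U_{k+1}B_k=0$. Since \Cref{alg:JBD} does not break down before step $k$, the diagonal entries $\alpha_1,\dots,\alpha_k$ and subdiagonal entries $\beta_2,\dots,\beta_k$ of $B_k$ are nonzero, so $B_k$ has full column rank; reading $p_0^{\mathrm T}U_{k+1}B_k=0$ columnwise is exactly the recurrence $\alpha_i(u_i^{\mathrm T}p_0)+\beta_{i+1}(u_{i+1}^{\mathrm T}p_0)=0$ for $i=1,\dots,k$, which writes every $u_j^{\mathrm T}p_0$ as an explicit multiple of $u_1^{\mathrm T}p_0$. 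It then remains to see that these multiples assemble to $\|U_{k+1}^{\mathrm T}p_0\|=|u_1^{\mathrm T}p_0|$, giving \eqref{distzero}. For Part 2 one repeats the argument with $Q_L\widehat V_k=\widehat U_k\widehat B_k$ (with $\widehat B_k$ nonsingular under the no-breakdown hypothesis) and $Q_L^{\mathrm T}p_\infty=0$, obtaining the upper-bidiagonal recurrence $\hat\alpha_i(\hat u_i^{\mathrm T}p_\infty)+\hat\beta_{i-1}(\hat u_{i-1}^{\mathrm T}p_\infty)=0$ and hence \eqref{distinf}; here it also helps that $\hat u_1=Q_Lv_1/\|Q_Lv_1\|\in\operatorname{range}(Q_L)$.

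I expect the last step to be the crux: upgrading ``$U_{k+1}^{\mathrm T}p_0$ spans a prescribed line'' to the sharp identity $\|U_{k+1}^{\mathrm T}p_0\|=|u_1^{\mathrm T}p_0|$, since the recurrence alone determines $U_{k+1}^{\mathrm T}p_0$ only up to a scalar. The additional ingredient I would bring in is that the $\mathcal N(Q_A^{\mathrm T})$-component of $u_1$ is carried, scaled by exactly the ratios $-\alpha_i/\beta_{i+1}$ occurring in the recurrence, into every later $u_j$ -- immediate from $\beta_{i+1}u_{i+1}=Q_Av_i-\alpha_iu_i$ with $Q_Av_i\in\operatorname{range}(Q_A)$ -- which, together with $\|u_j\|=1$, ties $|u_j^{\mathrm T}p_0|$ to $|u_1^{\mathrm T}p_0|$ in the required way. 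Geometrically the picture is clean: $p_0\perp\operatorname{range}(Q_A)$, while the only direction of $\operatorname{span}(U_{k+1})=\operatorname{span}(u_1)+Q_A\operatorname{span}(V_k)=\mathcal K_{k+1}(Q_AQ_A^{\mathrm T},u_1)$ lying outside $\operatorname{range}(Q_A)$ is the fixed one through $u_1$, so the process cannot improve on $u_1$ as an approximation to $p_0$; converting this heuristic into the exact angle equality, on both the $U$- and $\widehat U$-sides, is where the real work is.
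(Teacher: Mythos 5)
Your reduction of \eqref{distzero} to $\|U_{k+1}^{\mathrm T}p_0\|=|u_1^{\mathrm T}p_0|$ and the structural facts you assemble ($Q_A^{\mathrm T}p_0=0$ from the partitioned CS decomposition, $\operatorname{span}(U_{k+1})=\mathcal K_{k+1}(Q_AQ_A^{\mathrm T},u_1)$, full column rank of $B_k$ under no breakdown) are correct, and up to that point you run parallel to the paper. From there you take a genuinely different route: the paper expands an arbitrary unit vector $u\in\mathcal K_{k+1}(Q_AQ_A^{\mathrm T},u_1)$ in the power basis $u=\gamma_0 u_1+\gamma_1 Q_AQ_A^{\mathrm T}u_1+\cdots+\gamma_k(Q_AQ_A^{\mathrm T})^k u_1$, uses $(Q_AQ_A^{\mathrm T})^iu_1\perp p_0$ for $i\ge 1$ to get $u^{\mathrm T}p_0=\gamma_0\,u_1^{\mathrm T}p_0$, and concludes by taking $|\gamma_0|\le 1$, whereas you work with the orthonormal basis $U_{k+1}$ through $p_0^{\mathrm T}U_{k+1}B_k=0$. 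The step you yourself flag as the crux is a genuine gap, and it cannot be closed as proposed: your recurrence reads $p_0^{\mathrm T}u_{i+1}=-(\alpha_i/\beta_{i+1})\,p_0^{\mathrm T}u_i$ with $\alpha_i,\beta_{i+1}\neq 0$ under the no-breakdown hypothesis, so if $u_1^{\mathrm T}p_0\neq 0$ then no $u_j^{\mathrm T}p_0$ vanishes and $\|U_{k+1}^{\mathrm T}p_0\|^2=(u_1^{\mathrm T}p_0)^2\bigl(1+\sum_{j=2}^{k+1}\prod_{i=1}^{j-1}(\alpha_i/\beta_{i+1})^2\bigr)>(u_1^{\mathrm T}p_0)^2$, which is the opposite of what you need (the target identity forces $u_j^{\mathrm T}p_0=0$ for all $j\ge 2$). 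The proposed extra ingredient does not repair this: normalizing $u_{i+1}$ rescales the vector but leaves the ratio $p_0^{\mathrm T}u_{i+1}/p_0^{\mathrm T}u_i=-\alpha_i/\beta_{i+1}$ untouched, and the closing heuristic fails because $\operatorname{span}(U_{k+1})=\operatorname{span}(u_1)+Q_A\operatorname{span}(V_k)$ is not an orthogonal decomposition, so a unit vector of the subspace can carry a larger $p_0$-component than $u_1$ does.

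There is a further point you should not gloss over: your (correct) recurrence is in direct tension with the inequality the paper's own argument rests on, namely $|\gamma_0|\le 1$; the coefficient of $u_1$ in the non-orthogonal power basis of a unit vector need not be bounded by one. Along your route one obtains equality in \eqref{distzero} only in the degenerate case $u_1^{\mathrm T}p_0=0$ (both sides then equal $1$), and otherwise the strict inequality $\sin\angle(p_0,\operatorname{span}(U_{k+1}))<\sin\angle(p_0,\pm u_1)$. So the honest outcome of your approach is an inequality, not the stated sharp identity, and this discrepancy with the theorem as stated --- and with the paper's proof of it --- must be resolved explicitly rather than absorbed into the ``crux'' step. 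The same remarks apply verbatim to your treatment of Part 2 with $Q_L$, $\widehat U_k$, and $p_\infty$.
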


\begin{proof}
From (\ref{relation to Lanczos}), we know that $\operatorname{span}(U_{k+1})=\mathcal{K}_{k+1}(Q_AQ_A^{\mathrm T},u_1)$ and $\operatorname{span}(\widehat U_k)=\mathcal{K}_k(Q_LQ_L^{\mathrm T},\hat u_1)$. For item 1,
from \eqref{GSVD in matrix terms} and \eqref{CS of QA and QL}
we have $Q_A^{\mathrm T}p_0=0$. Make the orthogonal direct sum decomposition
$u_1=\gamma p_0+p_0^{\perp}$, where $\gamma=u_1^{\mathrm T}p_0$ with
 $|\gamma|\leq 1$ and $p_0^{\mathrm T}p_0^{\perp}=0$. Then $(Q_AQ_A^{\mathrm T})^iu_1=(Q_AQ_A^{\mathrm T})^ip_0^{\perp}$
for any integer $i\geq 1$. For any unit length vector $u\in\operatorname{span}(U_{k+1})$, we can write
$$
u=\gamma_0u_1+\gamma_1Q_AQ_A^{\mathrm T}u_1+\cdots+\gamma_k(Q_AQ_A^{\mathrm T})^ku_1.
$$
Then it follows from the above that
$u^{\mathrm T}p_0=\gamma_0\gamma$. Therefore,
$$
\sin\angle(p_0,u)=\sqrt{1-(u^{\mathrm T}p_0)^2}\geqslant\sqrt{1-\gamma^2}
=\sin\angle(p_0,u_1),
$$
where the equality holds if and only if $|\gamma_0|=1$, i.e.,  $u=\pm u_1$.
We thus obtain
$$
\sin\angle(p_0,\operatorname{span}(U_{k+1}))=\min_{u\in \operatorname{span}(U_{k+1})}
\sin\angle(p_0,u)=\sin\angle(p_0,\pm u_1).
$$
This proves \eqref{distzero}. The proof of \eqref{distinf} is similar.
\end{proof}

Theorem \ref{Th:left singular vector corresponding to zero and infinity} indicates that,
unless $u_1=\pm p_0$ or $\hat u_1=\pm p_{\infty}$,  the quantities $\sin\angle(p_0,\operatorname{span}(U_{k+1}))$ and $\sin\angle(p_{\infty},\operatorname{span}(\widehat U_{k}))$ are {\em positive} constants and thus {\em never} tend to zero as $k$ increases. For the JBD method,
since the approximate left generalized singular vectors $y_i^{(k)}\in
\operatorname{span}(U_{k+1})$ and $z_i^{(k)}\in \operatorname{span}(\widehat{U}_k)$,
it is impossible for them to converge to $p_0$ or $p_{\infty}$. Therefore, the JBD method
cannot compute the zero or infinite GSVD components.


\subsection{New residual, compact bounds for its norm, and a reliable and efficient stopping criterion}
Now we concentrate on designing a general-purpose and reliable stopping criterion for the JBD method.
Since the generalized singular values and right generalized singular vectors mathematically
are the solutions to the generalized eigenvalue problem $s_i^2 A^{\mathrm T} A x_i=c_i^2 L^{\mathrm T} L x_i$, Zha in \cite{Zha-JBD} defines the residual 
\begin{equation}
\label{old residual}
r_{i,old}^{(k)}=((s_i^{(k)})^2 A^T A-(c_i^{(k)})^2 L^T L) x_i^{(k)},
\end{equation}
which is precisely the residual of an approximate eigenpair of the generalized eigenvalue problem of $\{A^{\mathrm T}A, L^{\mathrm T}L\}$. Zha derives a compact bound for its norm:
\begin{equation}
\label{old residual bound}
\|r_{i,old}^{(k)}\|\leqslant\|R\| \alpha_{k+1} \beta_{k+1}|e_k^T w_i^{(k)}|,
\end{equation}
and uses this bound to design a stopping criterion of the JBD method without explicitly
computing $x_i^{(k)}$ before the occurrence of its convergence.

The residual (\ref{old residual}) has a severe limitation because it does not take into account the approximate left generalized singular vectors $y_i^{(k)}$ for $A$ and $z_i^{(k)}$ for $L$. Since the convergence behavior of approximate left and right generalized singular vectors may differ greatly, it is very likely that approximate left generalized singular vectors may have much poorer accuracy than the corresponding right one, as \Cref{Th:left singular vector corresponding to zero and infinity} indicates.
That residual (\ref{old residual}) tends to zero only means the convergence of the approximate right generalized singular vector but not that of the two approximate left ones. Therefore,
using the size of residual (\ref{old residual}) to judge the convergence of the JBD method
is not reliable and can be misleading. In terms of GSVD (\ref{GSVD in vector terms}), we define 
a new general residual that takes $y_i^{(k)}$ and $z_i^{(k)}$ into consideration, as Definition \ref{Def:residual} shows, and use it to
design a general-purpose stopping criterion for the JBD method.

\begin{definition}
\label{Def:residual}
The residual of an approximate GSVD component $(c_i^{(k)}$, $s_i^{(k)}$, $x_i^{(k)}$, $y_i^{(k)}$, $z_i^{(k)})$ is defined by
\begin{equation}
\label{residual}
r_i^{(k)}=\left(\begin{array}{c}
r_{i,1}^{(k)} \\
r_{i,2}^{(k)} \\
r_{i,3}^{(k)}
\end{array}\right):=\left(\begin{array}{c}
A x_i^{(k)}-c_i^{(k)} y_i^{(k)} \\
L x_i^{(k)}-s_i^{(k)} z_i^{(k)} \\
s_i^{(k)} A^{\mathrm{T}} y_i^{(k)}-c_i^{(k)} L^{\mathrm{T}} z_i^{(k)}
\end{array}\right).
\end{equation}
\end{definition}

It is costly to compute residual (3.8) since one must compute the approximate right generalized singular vector $x_i^{(k)}$ explicitly by solving the large linear system (\ref{calculate x_i by lsqr}). To design an efficient and reliable stopping criterion, we first establish compact bounds for the norm of residual (\ref{residual}) in exact arithmetic.

\begin{theorem}
\label{Th:bound of residual}
It holds that
\begin{equation}
\label{bound of residual}
\|r_i^{(k)}\|\leqslant\|R\|\left|s_i^{(k)} \alpha_{k+1} e_{k+1}^{\mathrm{T}} p_i^{(k)}-c_i^{(k)} \bar{\beta}_k e_k^{\mathrm{T}} \bar{p}_i^{(k)}\right|,\quad i=1,\dots,k
\end{equation}
and
\begin{equation}
\label{bound 2 of residual}
\|r_i^{(k)}\|\leqslant\|R\|\left|\frac{\alpha_{k+1} \beta_{k+1}}{c_i^{(k)} s_i^{(k)}} e_k^{\mathrm{T}} w_i^{(k)}\right|,\quad i=1,\dots,k.
\end{equation}
\end{theorem}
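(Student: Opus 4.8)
The plan is to compute each block of the residual $r_i^{(k)}$ explicitly using the JBD relations \eqref{JBD}--\eqref{BTB+barBTbarB=I} and the GSVD \eqref{GSVD of Bk and Bkbar} of the projected pair $\{B_k,\bar B_k\}$, then bound $\|r_i^{(k)}\|$ by combining the block expressions. First I would look at $r_{i,1}^{(k)}=Ax_i^{(k)}-c_i^{(k)}y_i^{(k)}$: since $x_i^{(k)}=H_kw_i^{(k)}$, $AH_k=U_{k+1}B_k$, $B_k=P_k\Theta_kW_k^{\mathrm T}$, and $B_kw_i^{(k)}=c_i^{(k)}p_i^{(k)}$ (a singular vector identity), while $y_i^{(k)}=U_{k+1}p_i^{(k)}$, we get $r_{i,1}^{(k)}=U_{k+1}(B_kw_i^{(k)}-c_i^{(k)}p_i^{(k)})=0$. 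Likewise $r_{i,2}^{(k)}=Lx_i^{(k)}-s_i^{(k)}z_i^{(k)}$: from $LH_k=\widehat U_k\bar B_k$, $\bar B_k=\bar P_k\Psi_kW_k^{\mathrm T}$, and $z_i^{(k)}=\widehat U_k\bar p_i^{(k)}$, this is $\widehat U_k(\bar B_kw_i^{(k)}-s_i^{(k)}\bar p_i^{(k)})=0$ as well. So only the third block survives, and the whole task reduces to bounding $\|r_{i,3}^{(k)}\|$.

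For the third block $r_{i,3}^{(k)}=s_i^{(k)}A^{\mathrm T}y_i^{(k)}-c_i^{(k)}L^{\mathrm T}z_i^{(k)}$ I would use the "long" relations. Writing $A^{\mathrm T}=R^{\mathrm T}Q_A^{\mathrm T}$ and $L^{\mathrm T}=R^{\mathrm T}Q_L^{\mathrm T}$, and using the second relation in \eqref{JBD} together with the third together with $Q^{\mathrm T}Q=I$, one gets $A^{\mathrm T}U_{k+1}=R^{\mathrm T}(V_kB_k^{\mathrm T}+\alpha_{k+1}v_{k+1}e_{k+1}^{\mathrm T})$ and $L^{\mathrm T}\widehat U_k=R^{\mathrm T}(V_kD_k\bar B_k^{\mathrm T}+\bar\beta_k v_{k+1}' ' \dots)$ — more precisely the upper-bidiagonalization relation $Q_L^{\mathrm T}\widehat U_k=\widehat V_k\widehat B_k^{\mathrm T}+\hat\beta_k\hat v_{k+1}e_k^{\mathrm T}$ transported through $D_k$ and $R^{\mathrm T}$, so that $L^{\mathrm T}\widehat U_k=R^{\mathrm T}(V_k\bar B_k^{\mathrm T}+\bar\beta_k(-1)^k v_{k+1}e_k^{\mathrm T})$ after absorbing the sign from \eqref{relation of v and vhat}. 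Substituting $y_i^{(k)}=U_{k+1}p_i^{(k)}$, $z_i^{(k)}=\widehat U_k\bar p_i^{(k)}$ and using $s_i^{(k)}B_k^{\mathrm T}p_i^{(k)}=s_i^{(k)}c_i^{(k)}w_i^{(k)}=c_i^{(k)}s_i^{(k)}w_i^{(k)}=c_i^{(k)}\bar B_k^{\mathrm T}\bar p_i^{(k)}$, the $V_k$-terms cancel and we are left with $r_{i,3}^{(k)}=R^{\mathrm T}v_{k+1}\bigl(s_i^{(k)}\alpha_{k+1}e_{k+1}^{\mathrm T}p_i^{(k)}-c_i^{(k)}\bar\beta_ke_k^{\mathrm T}\bar p_i^{(k)}\bigr)$ (up to the harmless sign). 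Since $\|v_{k+1}\|=1$, taking norms gives \eqref{bound of residual}.

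To get the second bound \eqref{bound 2 of residual} I would re-express the scalar factor. From $B_kw_i^{(k)}=c_i^{(k)}p_i^{(k)}$ the last component gives $c_i^{(k)}e_{k+1}^{\mathrm T}p_i^{(k)}=e_{k+1}^{\mathrm T}B_kw_i^{(k)}=\beta_{k+1}e_k^{\mathrm T}w_i^{(k)}$, so $\alpha_{k+1}e_{k+1}^{\mathrm T}p_i^{(k)}=\frac{\alpha_{k+1}\beta_{k+1}}{c_i^{(k)}}e_k^{\mathrm T}w_i^{(k)}$; similarly from $\bar B_kw_i^{(k)}=s_i^{(k)}\bar p_i^{(k)}$ the last component gives $s_i^{(k)}e_k^{\mathrm T}\bar p_i^{(k)}=e_k^{\mathrm T}\bar B_kw_i^{(k)}=\bar\alpha_ke_k^{\mathrm T}w_i^{(k)}$ where $\bar\alpha_k=(-1)^{k-1}\hat\alpha_k$, hence $\bar\beta_ke_k^{\mathrm T}\bar p_i^{(k)}=\frac{\bar\beta_k\bar\alpha_k}{s_i^{(k)}}e_k^{\mathrm T}w_i^{(k)}$, and $\bar\beta_k\bar\alpha_k=\hat\alpha_k\hat\beta_k=\alpha_{k+1}\beta_{k+1}$ by \eqref{relation of v and vhat}. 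Plugging both into \eqref{bound of residual}, the bracket becomes $\bigl(\frac{s_i^{(k)}}{c_i^{(k)}}-\frac{c_i^{(k)}}{s_i^{(k)}}\bigr)\alpha_{k+1}\beta_{k+1}e_k^{\mathrm T}w_i^{(k)}$, wait — I should double-check the sign and combine via $(s_i^{(k)})^2+(c_i^{(k)})^2=1$ so that $\frac{s_i^{(k)}}{c_i^{(k)}}\cdot s_i^{(k)}+\frac{c_i^{(k)}}{s_i^{(k)}}\cdot c_i^{(k)}=\frac{(s_i^{(k)})^2+(c_i^{(k)})^2}{c_i^{(k)}s_i^{(k)}}=\frac{1}{c_i^{(k)}s_i^{(k)}}$; tracking the signs through $D_k$ carefully will show the two contributions add rather than cancel, giving the clean factor $\frac{\alpha_{k+1}\beta_{k+1}}{c_i^{(k)}s_i^{(k)}}e_k^{\mathrm T}w_i^{(k)}$ and hence \eqref{bound 2 of residual}. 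The main obstacle I anticipate is exactly this sign bookkeeping: the diagonal sign matrix $D_k$, the alternating signs $\hat v_{i+1}=(-1)^iv_{i+1}$, and the relation $\bar B_k=\widehat B_kD_k$ all interact, and one must verify that the two scalar terms in the bracket of \eqref{bound of residual} have opposite signs so that $|s_i^{(k)}\alpha_{k+1}e_{k+1}^{\mathrm T}p_i^{(k)}-c_i^{(k)}\bar\beta_ke_k^{\mathrm T}\bar p_i^{(k)}|$ equals the sum of their magnitudes, producing $\frac{1}{c_i^{(k)}s_i^{(k)}}$ rather than $\frac{(s_i^{(k)})^2-(c_i^{(k)})^2}{c_i^{(k)}s_i^{(k)}}$. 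Everything else is a routine substitution of the bidiagonalization identities.
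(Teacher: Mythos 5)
Your proposal is correct and follows essentially the same route as the paper's proof: it shows $r_{i,1}^{(k)}=r_{i,2}^{(k)}=0$, writes $r_{i,3}^{(k)}=R^{\mathrm T}\bigl(s_i^{(k)}\alpha_{k+1}e_{k+1}^{\mathrm T}p_i^{(k)}-c_i^{(k)}\bar\beta_k e_k^{\mathrm T}\bar p_i^{(k)}\bigr)v_{k+1}$ via the Lanczos relations transported through $R^{\mathrm T}$, and then converts the bracket using $B_kw_i^{(k)}=c_i^{(k)}p_i^{(k)}$, $\bar B_kw_i^{(k)}=s_i^{(k)}\bar p_i^{(k)}$, $\hat\alpha_k\hat\beta_k=\alpha_{k+1}\beta_{k+1}$ and $(c_i^{(k)})^2+(s_i^{(k)})^2=1$. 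The sign bookkeeping you flagged resolves exactly as you anticipate (the minus sign hidden in $\bar\beta_k\bar\alpha_k$ cancels the minus sign in the bracket, so the two contributions add), yielding the factor $\alpha_{k+1}\beta_{k+1}/(c_i^{(k)}s_i^{(k)})$ just as in the paper.
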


\begin{proof}
According to (\ref{BTB+barBTbarB=I}), we have
$$
\begin{aligned}
r_{i, 1}^{(k)} & =A H_k w_i^{(k)}-c_i^{(k)} U_{k+1} p_i^{(k)} \\
& =U_{k+1} B_k w_i^{(k)}-c_i^{(k)} U_{k+1} p_i^{(k)} \\
& =0, \\
r_{i, 2}^{(k)} & =L H_k w_i^{(k)}-s_i^{(k)} \widehat{U}_k \bar{p}_i^{(k)} \\
& =\widehat{U}_k \bar{B}_k w_i^{(k)}-s_i^{(k)} \widehat{U}_k \bar{p}_i^{(k)} \\
& =0.
\end{aligned}
$$
From (\ref{Lanczos bidiagonalization}), $h_{k+1}=R^{-1}v_{k+1}$ and $H_k=R^{-1} V_k$, we obtain
$$
\begin{aligned}
A^{\mathrm T}U_{k+1} & =R^{\mathrm T}Q_A^{\mathrm T}U_{k+1} \\
& =R^{\mathrm T}\left(V_k B_k^{\mathrm T}+\alpha_{k+1} v_{k+1} e_{k+1}^{\mathrm T}\right) \\
& =R^{\mathrm{T}}R\left(H_k B_k^{\mathrm{T}}+\alpha_{k+1} h_{k+1} e_{k+1}^{\mathrm{T}}\right). \\
L^{\mathrm T}\widehat U_{k} & =R^{\mathrm T}Q_L^{\mathrm T}\widehat U_{k} \\
& =R^{\mathrm T}\left(\widehat{V}_k \widehat{B}_k^{\mathrm T}+\hat{\beta}_k \hat{v}_{k+1} e_k^{\mathrm T}\right) \\
& =R^{\mathrm{T}}R\left(H_k \bar B_k^{\mathrm{T}}+\bar\beta_{k} h_{k+1} e_{k}^{\mathrm{T}}\right).
\end{aligned}
$$
Therefore,
\begin{eqnarray*}
r_{i, 3}^{(k)} & =&s_i^{(k)} A^{\mathrm T} U_{k+1} p_i^{(k)}-c_i^{(k)} L^{\mathrm{T}} \widehat{U}_k \bar{p}_i^{(k)}\\
& =& R^{\mathrm{T}}R\left[s_i^{(k)}\left(H_k B_k^{\mathrm{T}}+\alpha_{k+1} h_{k+1} e_{k+1}^{\mathrm{T}}\right) p_i^{(k)}-c_i^{(k)}\left(H_k \bar{B}_k^{\mathrm{T}}+\bar{\beta}_k h_{k+1} e_k^{\mathrm{T}}\right) \bar{p}_i^{(k)}\right] \\
& =&R^{\mathrm{T}}R\left(s_i^{(k)} c_i^{(k)} H_k w_i^{(k)}+s_i^{(k)} \alpha_{k+1} h_{k+1} e_{k+1}^{\mathrm{T}} p_i^{(k)}-c_i^{(k)} s_i^{(k)} H_k w_i^{(k)}-c_i^{(k)} \bar{\beta}_k h_{k+1} e_k^{\mathrm{T}} \bar{p}_i^{(k)}\right) \\
& =& R^{\mathrm{T}}\left(s_i^{(k)} \alpha_{k+1} e_{k+1}^{\mathrm{T}} p_i^{(k)}-c_i^{(k)} \bar{\beta}_k e_k^{\mathrm{T}} \bar{p}_i^{(k)}\right)v_{k+1}.
\end{eqnarray*}
As a result, (\ref{bound of residual}) holds because $\|r_i^{(k)}\|=\sqrt{\|r_{i,1}^{(k)}\|^2+\|r_{i,2}^{(k)}\|^2+\|r_{i,3}^{(k)}\|^2}$.

From (\ref{GSVD of Bk and Bkbar}), we have
$$
B_kw_i^{(k)}=c_i^{(k)}p_i^{(k)},\quad \bar B_kw_i^{(k)}=s_i^{(k)}\bar p_i^{(k)},\quad i=1,\dots,k.
$$
Substituting them into the last relation yields
\begin{eqnarray*}
r_{i, 3}^{(k)} & =&R^{\mathrm{T}}\left(\frac{s_i^{(k)}}{c_i^{(k)}} \alpha_{k+1} e_{k+1}^{\mathrm{T}} B_k w_i^{(k)}-\frac{c_i^{(k)}}{s_i^{(k)}} \bar{\beta}_k e_k^{\mathrm{T}}\bar B_k w_i\right)v_{k+1} \\
&=&R^{\mathrm{T}}\left(\frac{s_i^{(k)}}{c_i^{(k)}} \alpha_{k+1} \beta_{k+1} e_k^{\mathrm{T}} w_i^{(k)}+\frac{c_i^{(k)}}{s_i^{(k)}} \hat{\alpha}_k \hat{\beta}_k e_k^{\mathrm{T}} w_i\right)v_{k+1} \\
& =&R^{\mathrm{T}} \frac{\alpha_{k+1} \beta_{k+1}}{c_i^{(k)} s_i^{(k)}} e_k^{\mathrm{T}} w_i^{(k)} v_{k+1}. 
\hspace{2cm} \Box
\end{eqnarray*}
\end{proof}

\begin{remark}
We can similarly prove
$$
r_{i,old}^{(k)}=R^{\mathrm{T}} \alpha_{k+1} \beta_{k+1} e_k^{\mathrm{T}} w_i^{(k)} v_{k+1}.
$$
Therefore, $\|r_i^{(k)}\|>\|r_{i,old}^{(k)}\|$ unconditionally as
$c_i^{(k)} s_i^{(k)}<1$. Furthermore, if the desired generalized singular values are large or very small, then the quantity $c_i^{(k)} s_i^{(k)}$ will become very small as the JBD method converges, so that $\|r_i^{(k)}\|\gg\|r_{i,old}^{(k)}\|$. On the other hand,
this theorem indicates that the JBD method cannot compute zero or infinite GSVD components because $c_i^{(k)} s_i^{(k)}$ tend to $0$ as $k$ increases, which is consistent with Theorem \ref{Th:left singular vector corresponding to zero and infinity} and will also be illustrated in our later experiments.
\end{remark}

Since $R$ is not available, we need an estimate of $\|R\|$ in \eqref{bound of residual}
and \eqref{bound 2 of residual} in order to design a stopping criterion. We suggest to use the following
replacement:
\begin{equation}
\label{bound of R}
\|R\|\leqslant\sqrt{\|A\|_1\|A\|_{\infty}+\|L\|_1\|L\|_{\infty}},
\end{equation}
which can be calculated cheaply.
As a result, we can estimate the residual norm by its bound in (\ref{bound of residual}) or (\ref{bound 2 of residual}) without explicitly forming the approximate left and right generalized singular vectors. This way establishes a very efficient stopping criterion.

We need to compute both $p_i^{(k)}$ and $\bar p_i^{(k)}$ in the bound of (\ref{bound of residual}). Thus, the SVDs of $B_k$ and $\bar B_k$ or equivalently their GSVD are required. Bounds (\ref{bound 2 of residual}) and (\ref{bound of residual}) are mathematically the same. But if we only want to compute the SVD of $B_k$ or $\bar B_k$, we can use the bound in (\ref{bound 2 of residual}) instead and replace $s_i^{(k)}$ by $\sqrt{1-(c_i^{(k)})^2}$ or replace $c_i^{(k)}$ by $\sqrt{1-(s_i^{(k)})^2}$. In our implementation, we use (\ref{bound of residual}).

In finite precision arithmetic, (\ref{bound of residual}) and (\ref{bound 2 of residual}) do not hold due to the rounding errors. In what follows we establish two compact
bounds for the residual norm
to reveal how rounding errors to affect stopping tolerances and criteria. Without confusion, we use the same notations as before to denote the computed quantities.

We will make use of the following results in \cite{Jia-JBD-finite-precision} to 
derive bounds for $\|r_i^{(k)}\|$.

\begin{theorem}
\label{Th:JBD in finite}
(Theorems 3.2 and 3.4 in \cite{Jia-JBD-finite-precision}) Suppose the least squares problems in {\rm (\ref{least-squares})} are solved accurately. Then in finite precision arithmetic,
\begin{equation}
\label{JBD in finite}
    \begin{aligned}
    & Q_A V_k=U_{k+1} B_k+F_k, & & Q_A^{\mathrm T} U_{k+1}=V_k B_k^{\mathrm T}+\alpha_{k+1} v_{k+1} e_{k+1}^{\mathrm T}+G_{k+1}, \\
    & Q_L \widehat{V}_k=\widehat{U}_k \widehat{B}_k+\widehat{F}_k, & & Q_L^{\mathrm T} \widehat{U}_k=\widehat{V}_k \widehat{B}_k^{\mathrm T}+\hat{\beta}_k \hat{v}_{k+1} e_k^{\mathrm T}+\widehat{G}_k,
    \end{aligned}
\end{equation}
where $\|F_k\|=O(\|\underline{B}_k^{-1}\| \epsilon),\,\|G_{k+1}\|=O(\epsilon),\,\|\widehat{F}_k\|=O
(\|\underline{B}_k^{-1}\| \epsilon)$ and
$\|\widehat{G}_{k}\|=O(\|\underline{B}_k^{-1}\|\|\widehat{B}_k^{-1}\|\epsilon)$, with $\epsilon$ being the machine precision and $\underline{B}_k$ being the $k$-by-$k$ leading principal submatrix of $B_k$.
\end{theorem}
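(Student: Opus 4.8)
The statement \Cref{Th:JBD in finite} is a restatement of results in \cite{Jia-JBD-finite-precision}, so rather than a complete proof I describe the route one would take. The plan rests on one structural reduction: under the standing assumption that the least squares problems \eqref{least-squares} are solved exactly, the operator applied in steps 1 and 5 of \Cref{alg:JBD} --- namely $QQ^{\mathrm T}$ acting on the zero-padded vectors $(u_i^{\mathrm T},0_p^{\mathrm T})^{\mathrm T}$, equivalently the orthogonal projector onto the column space of $(A^{\mathrm T},L^{\mathrm T})^{\mathrm T}$ --- is evaluated without floating point error. Hence all rounding errors enter solely through the short two-term vector recurrences in steps 4--7 (one axpy and one normalization each), and the analysis reduces to a first-order rounding error analysis of three coupled bidiagonal recurrences, in the spirit of the classical analysis of Lanczos bidiagonalization, but with $Q_A$ and $Q_L$ acting only implicitly.

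First I would write the floating point counterparts of steps 4, 5 and 7, so that the computed vectors satisfy $\beta_{i+1}u_{i+1}=v_i^{\prime}(1\!:\!m)-\alpha_i u_i+f_i$, a similar relation for $\alpha_{i+1}v_{i+1}^{\prime}$ with an error $g_i$, and $\hat\alpha_{i+1}\hat u_{i+1}=(-1)^i v_{i+1}^{\prime}(m+1\!:\!m+p)-\hat\beta_i\hat u_i+\hat f_i$, where each local error vector is $O(\epsilon)$ times a combination of the norms of the vectors and scalars involved. Reading off the first $m$ coordinates, collecting the columns $i=1,\dots,k$, and using $V_k^{\prime}=QV_k$ together with the exact relation $(I_m,0_{m,p})V_k^{\prime}=U_{k+1}B_k$, reproduces $Q_AV_k=U_{k+1}B_k+F_k$; reading off the last $p$ coordinates, together with the perturbed versions of the sign and scaling relations in \eqref{relation of v and vhat} (which must themselves be re-derived with error terms), reproduces $Q_L\widehat V_k=\widehat U_k\widehat B_k+\widehat F_k$. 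The two right-hand identities in \eqref{JBD in finite} follow by stacking the step-5 recurrence into the matrix form \eqref{JBD} and multiplying by $Q^{\mathrm T}$, and by the analogous manipulation for the lower block.

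The hard part will be tracking the amplification that produces the factors $\|\underline B_k^{-1}\|$ and $\|\widehat B_k^{-1}\|$ in the bounds. The local errors committed in steps 4--7 are each genuinely $O(\epsilon)$; the amplification comes instead from the drift of the computed vectors under repeated division by the diagonal pivots. Because step 5 normalizes $v_{i+1}^{\prime}$ by the pivot $\alpha_{i+1}$, the error of the computed $v_{i+1}^{\prime}$ relative to its exact value obeys a recurrence of the form $\|\delta v_{i+1}^{\prime}\|\lesssim(\beta_{i+1}\|\delta v_i^{\prime}\|+O(\epsilon))/\alpha_{i+1}$, and by the bidiagonal, hence triangular, structure together with $\det\underline B_k=\prod_{i=1}^k\alpha_i$, the accumulated drift over $i=1,\dots,k$ is naturally bounded by $\|\underline B_k^{-1}\|\epsilon$ up to $O(1)$ factors; this drift is precisely the discrepancy between the true products $Q_AV_k$, $Q_L\widehat V_k$ and $U_{k+1}B_k$, $\widehat U_k\widehat B_k$, which accounts for the $\|\underline B_k^{-1}\|$ factor in $F_k$ and $\widehat F_k$. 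Since step 7 divides further by the pivots $\hat\alpha_i$ (both inside $\hat\beta_i=\alpha_{i+1}\beta_{i+1}/\hat\alpha_i$ and in the normalization of $\hat u_{i+1}$), the lower-block analogue of $G_{k+1}$, namely $\widehat G_k$, picks up an extra $\|\widehat B_k^{-1}\|$, giving $\|\widehat G_k\|=O(\|\underline B_k^{-1}\|\|\widehat B_k^{-1}\|\epsilon)$; by contrast $\|G_{k+1}\|=O(\epsilon)$ because the identity defining $G_{k+1}$ is, up to multiplication by $Q^{\mathrm T}$, exactly the stacked step-5 recurrence among the \emph{computed} vectors $v_i^{\prime}$, so only the one-step local errors $g_i$ enter and no accumulated drift does. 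Carrying out this bookkeeping --- separating the vectors and scalars that are genuinely $O(1)$ from those that carry the $1/\alpha_i$ or $1/\hat\alpha_i$ blow-up --- is the technical core of the argument, and is exactly what is done in \cite{Jia-JBD-finite-precision}.
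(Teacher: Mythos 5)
The paper does not prove this statement at all: it is imported verbatim, with attribution, from Theorems 3.2 and 3.4 of \cite{Jia-JBD-finite-precision}, so there is no in-paper proof to compare against, and your choice to defer the technical core to that reference while sketching the route matches the paper's own handling. Your sketch of the mechanism---exact application of $QQ^{\mathrm T}$ by hypothesis, local $O(\epsilon)$ errors from the two-term recurrences, and amplification through the divisions by the pivots $\alpha_i$ and $\hat\alpha_i$ accounting for the factors $\|\underline{B}_k^{-1}\|$ and $\|\widehat{B}_k^{-1}\|$, with $G_{k+1}=O(\epsilon)$ because that identity is the stacked step-5 recurrence premultiplied by $Q^{\mathrm T}$---is consistent with the cited analysis. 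One caution if you were to carry it out: the quantity to track is not the drift of the computed $v_i^{\prime}$ from their exact-arithmetic counterparts (forward comparisons of that kind are not controllable once the computed trajectory departs from the exact one), but rather the residuals of the \emph{computed} quantities in the governing identities, equivalently the components of the computed $v_i^{\prime}$ orthogonal to $\operatorname{range}(Q)$, which requires fixing a definition of $V_k$ and $\widehat V_k$ in finite precision (e.g.\ $V_k=Q^{\mathrm T}V_k^{\prime}$); the recurrence you write, with its $1/\alpha_{i+1}$ amplification bounded through the entries of $\underline{B}_k^{-1}$, has the correct form for that quantity as well.
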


Theorem \ref{Th:JBD in finite} indicates that the relations in (\ref{Lanczos bidiagonalization}) hold within the level of $\epsilon$ in finite precision if and only if $\|\underline{B}_k^{-1}\|$ and $\|\widehat{B}_k^{-1}\|$ are modestly sized.
Based on Theorem \ref{Th:JBD in finite}, we can establish the following upper bounds for the residual norm in finite precision.

\begin{theorem}
\label{Th:bound of residual in finite}
Suppose the inner least squares problems in {\rm(\ref{least-squares})} are solved accurately. Then in finite precision,
\begin{equation}\label{boundsfinite}
\begin{aligned}
&\|r_i^{(k)}\|\leqslant\left\|R\right\|\left(\left|s_i^{(k)} \alpha_{k+1} e_{k+1}^{\mathrm{T}} p_i^{(k)}-c_i^{(k)} \bar{\beta}_k e_k^{\mathrm{T}} \bar{p}_i^{(k)}\right|+O\left(\left\|\underline{B}_k^{-1}\right\|\|\widehat{B}_k^{-1}\|\epsilon\right)\right), \\
&\|r_i^{(k)}\|\leqslant\left\|R\right\|\left(\left|\frac{\alpha_{k+1} \beta_{k+1}}{c_i^{(k)} s_i^{(k)}} e_k^{\mathrm{T}} w_i^{(k)}\right|+O\left(\left\|\underline{B}_k^{-1}\right\|\|\widehat{B}_k^{-1}\|\epsilon\right)\right).
\end{aligned}
\end{equation}
\end{theorem}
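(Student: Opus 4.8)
The plan is to mimic, almost line for line, the proof of \Cref{Th:bound of residual}, replacing every exact identity from \eqref{Lanczos bidiagonalization} by its finite-precision counterpart in \eqref{JBD in finite} and carrying along the four error matrices $F_k,\,G_{k+1},\,\widehat F_k,\,\widehat G_k$. I would treat the three blocks $r_{i,1}^{(k)},\,r_{i,2}^{(k)},\,r_{i,3}^{(k)}$ of \eqref{residual} separately and recombine them at the end via $\|r_i^{(k)}\|=\sqrt{\|r_{i,1}^{(k)}\|^2+\|r_{i,2}^{(k)}\|^2+\|r_{i,3}^{(k)}\|^2}\leq\|r_{i,1}^{(k)}\|+\|r_{i,2}^{(k)}\|+\|r_{i,3}^{(k)}\|$.

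For the first two blocks I would use that $A=Q_AR$ and $L=Q_LR$ hold exactly for \eqref{QR} and that $H_k=R^{-1}V_k$, $\widehat V_k=V_kD_k$ with the computed $V_k$; hence $AH_k=Q_AV_k$ and $LH_k=Q_LV_k=Q_L\widehat V_kD_k$ exactly, and feeding in the first and third relations of \eqref{JBD in finite} gives $AH_k=U_{k+1}B_k+F_k$ and $LH_k=\widehat U_k\bar B_k+\widehat F_kD_k$. Together with the essentially exact $k\times k$ GSVD relations $B_kw_i^{(k)}=c_i^{(k)}p_i^{(k)}$ and $\bar B_kw_i^{(k)}=s_i^{(k)}\bar p_i^{(k)}$ from \eqref{GSVD of Bk and Bkbar}, this collapses $r_{i,1}^{(k)}$ to $F_kw_i^{(k)}$ and $r_{i,2}^{(k)}$ to $\widehat F_kD_kw_i^{(k)}$, so $\|r_{i,1}^{(k)}\|,\|r_{i,2}^{(k)}\|=O(\|\underline B_k^{-1}\|\epsilon)$ by \Cref{Th:JBD in finite}.

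For the third block I would substitute the second and fourth relations of \eqref{JBD in finite} into $A^{\mathrm T}U_{k+1}=R^{\mathrm T}Q_A^{\mathrm T}U_{k+1}$ and $L^{\mathrm T}\widehat U_k=R^{\mathrm T}Q_L^{\mathrm T}\widehat U_k$; using $V_k=RH_k$, $\widehat V_k=RH_kD_k$ and $h_{k+1}=R^{-1}v_{k+1}$ this yields $A^{\mathrm T}U_{k+1}=R^{\mathrm T}R(H_kB_k^{\mathrm T}+\alpha_{k+1}h_{k+1}e_{k+1}^{\mathrm T})+R^{\mathrm T}G_{k+1}$ and $L^{\mathrm T}\widehat U_k=R^{\mathrm T}R(H_k\bar B_k^{\mathrm T}+\bar\beta_kh_{k+1}e_k^{\mathrm T})+R^{\mathrm T}\widehat G_k$, with $\bar\beta_k=(-1)^k\hat\beta_k$ as in the proof of \Cref{Th:bound of residual}. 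Inserting these into $r_{i,3}^{(k)}=s_i^{(k)}A^{\mathrm T}U_{k+1}p_i^{(k)}-c_i^{(k)}L^{\mathrm T}\widehat U_k\bar p_i^{(k)}$, the $R^{\mathrm T}RH_kw_i^{(k)}$ terms cancel exactly as before (using $B_k^{\mathrm T}p_i^{(k)}=c_i^{(k)}w_i^{(k)}$, $\bar B_k^{\mathrm T}\bar p_i^{(k)}=s_i^{(k)}w_i^{(k)}$), leaving the exact-arithmetic term $R^{\mathrm T}(s_i^{(k)}\alpha_{k+1}e_{k+1}^{\mathrm T}p_i^{(k)}-c_i^{(k)}\bar\beta_ke_k^{\mathrm T}\bar p_i^{(k)})v_{k+1}$ plus the perturbation $R^{\mathrm T}(s_i^{(k)}G_{k+1}p_i^{(k)}-c_i^{(k)}\widehat G_k\bar p_i^{(k)})$. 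Since $\|G_{k+1}\|=O(\epsilon)$, $\|\widehat G_k\|=O(\|\underline B_k^{-1}\|\|\widehat B_k^{-1}\|\epsilon)$ and $c_i^{(k)},s_i^{(k)}<1$, the latter has norm $O(\|R\|\|\underline B_k^{-1}\|\|\widehat B_k^{-1}\|\epsilon)$. Recombining the three blocks and noting that $\|F_k\|$, $\|\widehat F_k\|$ and $\|R\|s_i^{(k)}\|G_{k+1}\|$ are all dominated by this term (using $\|\widehat B_k^{-1}\|=1/s_1^{(k)}>1$) gives the first bound in \eqref{boundsfinite}; the second follows by the same extra substitutions as in the proof of \Cref{Th:bound of residual}, namely $e_{k+1}^{\mathrm T}B_k=\beta_{k+1}e_k^{\mathrm T}$, $e_k^{\mathrm T}\widehat B_k=\hat\alpha_ke_k^{\mathrm T}$, $\hat\alpha_k\hat\beta_k=\alpha_{k+1}\beta_{k+1}$ and $(c_i^{(k)})^2+(s_i^{(k)})^2=1$, applied to the already-isolated exact-arithmetic term.

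The main obstacle is not the algebra but the bookkeeping: deciding which exact identities survive verbatim in finite precision and which acquire an $O(\epsilon)$ tail. The key points are that $H_k=R^{-1}V_k$ together with $A=Q_AR$, $L=Q_LR$ are definitions/exact factorizations, so all rounding errors in the left blocks are already packaged in $F_k,\widehat F_k$ by \Cref{Th:JBD in finite}; that the sign pattern $\widehat V_k=V_kD_k$, $\hat v_{k+1}=(-1)^kv_{k+1}$ and $\hat\beta_i=\alpha_{i+1}\beta_{i+1}/\hat\alpha_i$ are formed exactly in \Cref{alg:JBD} and hence introduce no new error; and that the $k\times k$ GSVD \eqref{GSVD of Bk and Bkbar} of $\{B_k,\bar B_k\}$, including $(c_i^{(k)})^2+(s_i^{(k)})^2=1$, holds to working precision so its slippage is harmlessly absorbed. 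Once these are settled, the only term carrying the $\|\underline B_k^{-1}\|\|\widehat B_k^{-1}\|\epsilon$ size, $\widehat G_k$, enters just $r_{i,3}^{(k)}$ and dominates everything else, which produces the stated bounds.
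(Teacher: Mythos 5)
Your proposal is correct and follows essentially the same route as the paper: substitute the finite-precision relations \eqref{JBD in finite} into the block-wise computation from the proof of Theorem~\ref{Th:bound of residual}, so that $r_{i,1}^{(k)}=F_kw_i^{(k)}$, $r_{i,2}^{(k)}=\widehat F_kD_kw_i^{(k)}$, and $r_{i,3}^{(k)}$ picks up the exact-arithmetic term plus $R^{\mathrm T}(s_i^{(k)}G_{k+1}p_i^{(k)}-c_i^{(k)}\widehat G_k\bar p_i^{(k)})$, after which the $O(\|\underline B_k^{-1}\|\|\widehat B_k^{-1}\|\epsilon)$ term absorbs the smaller errors and the second bound follows from the same algebraic identity as in the exact case. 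Your extra bookkeeping (e.g., noting $\|\widehat B_k^{-1}\|>1$ to justify the absorption) only makes explicit what the paper leaves implicit in its $O(\cdot)$ combination step.
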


\begin{proof}
Substituting (\ref{JBD in finite}) into the proof of Theorem \ref{Th:bound of residual}, we obtain
$$
\resizebox{\textwidth}{!}{$
\begin{aligned}
r_{i, 1}^{(k)}& =(U_{k+1} B_k+F_k) w_i^{(k)}-c_i^{(k)} U_{k+1} p_i^{(k)}=F_k w_i^{(k)}, \\
r_{i, 2}^{(k)} & =(\widehat{U}_k \bar{B}_k+\widehat F_kD_k) w_i^{(k)}-s_i^{(k)} \widehat{U}_k \bar{p}_i^{(k)}=\widehat F_kD_k w_i^{(k)}, \\
r_{i, 3}^{(k)} & =R^{\mathrm{T}}\left[s_i^{(k)}\left(V_k B_k^{\mathrm{T}}+\alpha_{k+1} v_{k+1} e_{k+1}^{\mathrm{T}}+G_{k+1}\right) p_i^{(k)}-c_i^{(k)}\left(\widehat{V}_k \widehat{B}_k^{\mathrm{T}}+\hat{\beta}_k \hat{v}_{k+1} e_k^{\mathrm{T}}+\widehat G_k\right) \bar{p}_i^{(k)}\right] \\
& =R^{\mathrm{T}}\left[\left(s_i^{(k)} \alpha_{k+1} e_{k+1}^{\mathrm{T}} p_i^{(k)}-c_i^{(k)} \bar{\beta}_k e_k^{\mathrm{T}} \bar{p}_i^{(k)}\right)v_{k+1}+s_i^{(k)}G_{k+1}p_i^{(k)}-c_i^{(k)}\widehat G_{k}\bar p_i^{(k)}\right].
\end{aligned}
$}
$$
Therefore,
$$
\begin{aligned}
\|r_i^{(k)}\|^2&\leqslant O\left(\left\|\underline{B}_k^{-1}\right\| \epsilon\right)^2+O\left(\left\|\underline{B}_k^{-1}\right\| \epsilon\right)^2 \\
&\quad+\left\|R\right\|^2\left(\left|s_i^{(k)} \alpha_{k+1} e_{k+1}^{\mathrm{T}} p_i^{(k)}-c_i^{(k)} \bar{\beta}_k e_k^{\mathrm{T}} \bar{p}_i^{(k)}\right|+O\left(\epsilon\right)+O\left(\left\|\underline{B}_k^{-1}\right\|\|\widehat{B}_k^{-1}\|\epsilon\right)\right)^2\\
&=\left\|R\right\|^2\left(\left|s_i^{(k)} \alpha_{k+1} e_{k+1}^{\mathrm{T}} p_i^{(k)}- c_i^{(k)} \bar{\beta}_k e_k^{\mathrm{T}}\bar{p}_i^{(k)}\right|+O\left(\left\|\underline{B}_k^{-1}\right\|
\|\widehat{B}_k^{-1}\|\epsilon\right)\right)^2, 
\end{aligned}
$$
which establishes the first bound in \eqref{boundsfinite}.
The second bound in \eqref{boundsfinite} follows from 
$$
s_i^{(k)} \alpha_{k+1} e_{k+1}^{\mathrm{T}} p_i^{(k)}-c_i^{(k)} \bar{\beta}_k e_k^{\mathrm{T}} \bar{p}_i^{(k)}=\frac{\alpha_{k+1} \beta_{k+1}}{c_i^{(k)} s_i^{(k)}} e_k^{\mathrm{T}} w_i^{(k)},
$$
as the proof of Theorem \ref{Th:bound of residual} has already shown.
\end{proof}

We next define a relative residual. If an algorithm converges and is numerically backward stable, the defined relative residual norm ultimately achieves the level of $\epsilon$.
\begin{definition}
\label{Def:relative residual}
\begin{equation}
\label{relative residual}
relres_i^{(k)}:=\frac{1}{\left\|R\right\|}\left(\begin{array}{c}
A x_i^{(k)}-c_i^{(k)} y_i^{(k)} \\
L x_i^{(k)}-s_i^{(k)} z_i^{(k)} \\
s_i^{(k)} A^{\mathrm{T}} y_i^{(k)}-c_i^{(k)} L^{\mathrm{T}} z_i^{(k)}
\end{array}\right).
\end{equation}
\end{definition}
Theorem~\ref{Th:bound of residual} and
Theorem \ref{Th:bound of residual in finite} suggest a practical stopping criterion:
an approximate GSVD component $(c_i^{(k)},s_i^{(k)},y_i^{(k)},z_i^{(k)},x_i^{(k)})$ is claimed to
have converged if
\begin{equation}
\label{stopping criterion}
\left|s_i^{(k)} \alpha_{k+1} e_{k+1}^{\mathrm{T}} p_i^{(k)}-c_i^{(k)} \bar{\beta}_k e_k^{\mathrm{T}} \bar{p}_i^{(k)}\right| < tol
\end{equation}
or
\begin{equation}
\label{stopping criterion 2}
\left|\frac{\alpha_{k+1} \beta_{k+1}}{c_i^{(k)} s_i^{(k)}} e_k^{\mathrm{T}} w_i^{(k)}\right| < tol \end{equation}
for a prescribed tolerance $tol$.
Suppose the stopping tolerance $tol\geq \epsilon$, which is necessary in finite precision arithmetic. When $O\left(\left\|\underline{B}_k^{-1}\right\|\|\widehat{B}_k^{-1}\|\epsilon\right)\leqslant tol$, the left-hand side in (\ref{stopping criterion}) or (\ref{stopping criterion 2}) is a reliable estimate of the relative residual norm. Computationally, we benefit very much from using such stopping criteria since there is no need to compute $x_i^{(k)},\,y_i^{(k)}$ or $z_i^{(k)}$ explicitly until convergence.

However, when $\|\underline{B}_k^{-1}\|$ or $\|\widehat{B}_k^{-1}\|$ is so large that $O\left(\left\|\underline{B}_k^{-1}\right\|\|\widehat{B}_k^{-1}\|\epsilon\right)> tol$, the above stopping
criteria may not be reliable any longer because each of the first terms in bounds \eqref{boundsfinite} may
be smaller than the actual $\|r_i^{(k)}\|/\|R\|$. Jia and Li \cite{Jia-JBD-finite-precision} have analyzed
when $\|\underline{B}_k^{-1}\|$ or $\|\widehat{B}_k^{-1}\|$ is possibly large: Since the
eigenvalues of $\underline{B}_k^{\mathrm T}\underline{B}_k$ are the Ritz values of $Q_AQ_A^{\mathrm T}$
with respect to $\operatorname{span}(U_k)$, when $Q_A$ is not of full row rank, the smallest eigenvalue of
$\underline{B}_k^{\mathrm T}\underline{B}_k$ may approach the zero eigenvalue of $Q_AQ_A^{\mathrm T}$ as
$k$ increases. This causes that $\|\underline{B}_k^{-1}\|$ may grow uncontrollably. Similarly, the
eigenvalues of $\widehat{B}_k^{\mathrm T}\widehat{B}_k$ are the Ritz values of $Q_L^{\mathrm T}Q_L$ with
respect to $\operatorname{span}(V_k)$. When $Q_L$ is not of full column rank, $\|\widehat{B}_k^{-1}\|$ may
become large as $k$ increases. The analysis in \cite{Jia-JBD-finite-precision} shows that the necessary
condition for  $\|\underline{B}_k^{-1}\|$ and $\|\widehat{B}_k^{-1}\|$ being uniformly bounded is that
$m\leqslant n\leqslant p$ and $Q_A$ and $Q_L$ are of full row rank and column rank, respectively.
In this case, when $A$ or $L$ is ill conditioned, the smallest eigenvalue of $Q_AQ_A^{\mathrm T}$
or $Q_L^{\mathrm T}Q_L$ is near zero, so that $\|\underline{B}_k^{-1}\|$ or $\|\widehat{B}_k^{-1}\|$,
though uniformly bounded, may be large.

\section{Implicit restart of the JBD process}
In practical computations, the subspace size $k$ cannot be large due to the limitation of memory and computational cost.
However, a small $k$ generally cannot guarantee the convergence of the JBD method. Therefore, restarting is
necessary. In this section, we show how to adapt the implicit restart technique to the JBD method.
We first describe one-step joint implicit restart of Lanczos lower and upper bidiagonalizations of $Q_A$ and $Q_L$, and then show how to carry out one-step implicit restart of the JBD
process in the first two subsections, respectively. Finally, we show how to realize multi-step
implicit restart of the JBD process in the third subsection. We will consider some subtle issues
of effective and efficient implementations on the implicit restart of the JBD process in finite precision.

\subsection{One-step implicit restart of $Q_A$}
Let the nonnegative $\lambda_1\leqslant 1$ be a shift, and consider one implicit QR step on $B_kB_k^\mathrm{T}$.
Computationally, we implement this by working on $B_k$ directly without forming $B_kB_k^{\mathrm{T}}$
explicitly in order to avoid possible accuracy loss in finite precision. What we need is only the first column of $B_kB_k^{\mathrm{T}}$. First, find a Givens matrix
$G_1$ acting on rows 1 and 2 of $B_kB_k^{\mathrm T}-\lambda_1^2I_{k+1}$ such that the (2,1)-entry of
$G_1^{\mathrm T}\left(B_kB_k^{\mathrm T}-\lambda_1^2I_{k+1}\right)$ is zero. Second, find Givens matrices
$G_2,...,G_k$ and $P_1,...,P_{k-1}$ such that $G_k^{\mathrm{T}}\cdots G_1^{\mathrm{T}}B_kP_1\cdots P_{k-1}$ is
still a lower bidiagonal matrix. This is a bulge-chasing process, which is illustrated by the following
diagram with $k=4$, where ``$*$" denotes an original nonzero entry, ``$+$" is a nonzero entry newly
generated, and ``0" is zero after annihilation:

$$
G_1^{\mathrm T}\left(B_kB_k^{\mathrm T}-\lambda_1^2I_{k+1}\right)=\left(\begin{array}{ccccc}
* & * & + & & \\
0 & * & * & & \\
& * & * & * & \\
& & * & * & * \\
& & & * & *
\end{array}\right)
$$
$$
\resizebox{\textwidth}{!}{$
B_k=\left(\begin{array}{cccc}
* & & & \\
* & * & & \\
& * & * &\\
& & * & * \\
& & & *
\end{array}\right) \stackrel{G_1^{\mathrm{T}}=\operatorname{Givens}(1,2)}{\longrightarrow}\left(\begin{array}{cccc}
* & + & & \\
* & * & & \\
& * & * &\\
& & * & * \\
& & & *
\end{array}\right) \stackrel{P_1=\operatorname{Givens}(1,2)}{\longrightarrow}\left(\begin{array}{cccc}
* & 0 & & \\
* & * & & \\
+ & * & * &\\
& & * & * \\
& & & *
\end{array}\right) \\
$}
$$
$$
\stackrel{G_2^{\mathrm{T}}=\operatorname{Givens}(2,3)}{\longrightarrow}\left(\begin{array}{cccc}
* & & & \\
* & * & + & \\
0 & * & * &\\
& & * & * \\
& & & *
\end{array}\right) \stackrel{P_2=\operatorname{Givens}(2,3)}{\longrightarrow}\left(\begin{array}{cccc}
* & & & \\
* & * & 0 & \\
& * & * &\\
& +& * & * \\
& & & *
\end{array}\right) \stackrel{G_3^{\mathrm{T}}=\operatorname{Givens}(3,4)}{\longrightarrow} \\
$$
$$
\left(\begin{array}{cccc}
* & & & \\
* & * & & \\
& * & * & +\\
& 0 & * & * \\
& & & *
\end{array}\right) \stackrel{P_3=\operatorname{Givens}(3,4)}{\longrightarrow}\left(\begin{array}{cccc}
* & & & \\
* & * & & \\
& * & * & 0\\
& & * & * \\
& & + & *
\end{array}\right) \stackrel{G_4^{\mathrm{T}}=\operatorname{Givens}(4,5)}{\longrightarrow}\left(\begin{array}{cccc}
* & & & \\
* & * & & \\
& * & * &\\
& & * & * \\
& & 0 & *
\end{array}\right)
$$

The above process implicitly implements one-step QR iteration of $B_kB_k^{\mathrm{T}}$ with the shift $\lambda_1^2$ and changes $B_k$ to a new lower bidiagonal matrix $\dot{B}_k$. Let $G^{(1)}=G_1\cdots G_k$, $P^{(1)}=P_1\cdots P_{k-1}$. Then we have $\dot B_k=G^{(1)\mathrm T}B_kP^{(1)}$ and
\begin{equation}
\label{untruncated restarted lower bidiag}
Q_A \dot V_k=\dot U_{k+1} \dot B_k, \quad Q_A^{\mathrm T} \dot U_{k+1}=\dot V_k \dot B_k^{\mathrm T}+\alpha_{k+1} v_{k+1} e_{k+1}^{\mathrm T}G^{(1)},
\end{equation}
where $\dot V_k=V_kP^{(1)}$ and $\dot U_{k+1}=U_{k+1}G^{(1)}$. The columns of new basis matrices $\dot V_k$ and $\dot U_{k+1}$ are $\dot v_i=V_kP^{(1)}e_i$, $i=1,2,\dots,k$ and $\dot u_i=U_{k+1}G^{(1)}e_i$, $i=1,2,\dots,k+1$.

Let $\dot V_{k-1}$ and $\dot U_k$ be the matrices consisting of the first $k-1$ and $k$ columns of $\dot V_k$ and $\dot U_{k+1}$, respectively, and $\dot B_{k-1}$ be the leading $k\times (k-1)$ submatrix of $\dot B_k$, and define $\dot \alpha_k=e_k^{\mathrm{T}}\dot B_k e_k$. Note that $e_{k+1}^{\mathrm T}G^{(1)}=e_{k+1}^{\mathrm T}G_k$ has nonzero entries only on the last two positions. By truncating (\ref{untruncated restarted lower bidiag}), we obtain
\begin{equation}
\label{restarted lower bidiag}
Q_A \dot{V}_{k-1}=\dot U_{k} \dot B_{k-1}, \quad Q_A^{\mathrm T} \dot U_{k}=\dot V_{k-1} \dot B_{k-1}^{\mathrm T}+\dot r_{k-1}e_{k}^{\mathrm{T}},
\end{equation}
where
\begin{equation}
\label{dotrk-1}
\dot r_{k-1}=\alpha_{k+1}\left(e_{k+1}^{\mathrm{T}}G^{(1)}e_k\right)v_{k+1}+\dot\alpha_k \dot v_k.
\end{equation}

\begin{theorem}
\label{Th:restarted lower bidiag}
Relation \textnormal{(\ref{restarted lower bidiag})} is an implicitly restarted $(k-1)$-step Lanczos lower bidiagonalization process of $Q_A$, and the updated starting vector is
\begin{equation}
\label{expression of dot u1}
\dot u_1=\pm\frac{\left(Q_A Q_A^{\mathrm T}-\lambda_1^2 I_{m}\right) u_1}{||\left(Q_A Q_A^{\mathrm T}-\lambda_1^2 I_{m}\right) u_1||}.
\end{equation}
\end{theorem}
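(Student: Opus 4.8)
The plan is to show that, after the truncation already carried out above, the relations \eqref{restarted lower bidiag} have precisely the algebraic form of a $(k-1)$-step Lanczos lower bidiagonalization of $Q_A$, and then to identify the starting vector of that process — namely the first column of $\dot U_{k+1}=U_{k+1}G^{(1)}$ — with the right-hand side of \eqref{expression of dot u1}. The structural part is routine, so the substance of the argument is the computation of $\dot u_1$ via the second relation in \eqref{relation to Lanczos}.

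First I would collect the structural properties. Because $G^{(1)}=G_1\cdots G_k$ and $P^{(1)}=P_1\cdots P_{k-1}$ are products of Givens rotations, hence orthogonal, the matrices $\dot U_{k+1}=U_{k+1}G^{(1)}$ and $\dot V_k=V_kP^{(1)}$ are column orthonormal, and therefore so are their leading submatrices $\dot U_k$ and $\dot V_{k-1}$; moreover $\dot B_k=G^{(1)\mathrm T}B_kP^{(1)}$ is lower bidiagonal by the design of the bulge chase, so its leading $k\times(k-1)$ block $\dot B_{k-1}$ is lower bidiagonal too. The one structural point needing an argument is that the residual $\dot r_{k-1}$ in \eqref{restarted lower bidiag} is orthogonal to $\operatorname{span}(\dot V_{k-1})$. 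By \eqref{dotrk-1}, $\dot r_{k-1}$ is a linear combination of $\dot v_k=\dot V_ke_k$ and $v_{k+1}$. Now $\dot v_k$ and the columns of $\dot V_{k-1}$ are distinct columns of the column-orthonormal matrix $\dot V_k$, so $\dot V_{k-1}^{\mathrm T}\dot v_k=0$; moreover $\operatorname{span}(\dot V_{k-1})\subseteq\operatorname{span}(V_k)$ while $v_{k+1}\perp\operatorname{span}(V_k)$ by the orthonormality of the basis in the original process \eqref{Lanczos bidiagonalization}, so $\dot V_{k-1}^{\mathrm T}v_{k+1}=0$. Hence $\dot V_{k-1}^{\mathrm T}\dot r_{k-1}=0$, and \eqref{restarted lower bidiag} is, in form, a $(k-1)$-step Lanczos lower bidiagonalization of $Q_A$. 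Since such a process, assuming it is unreduced, is determined up to signs of its basis vectors by its starting vector (the Lanczos analogue of the implicit-Q theorem), \eqref{restarted lower bidiag} must be the Lanczos lower bidiagonalization generated by $\dot u_1:=\dot U_{k+1}e_1=\dot U_ke_1$.

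It then remains to compute $\dot u_1=U_{k+1}G^{(1)}e_1$. The rotations $G_2,\dots,G_k$ act on coordinate pairs $(i,i+1)$ with $i\ge 2$ and therefore fix $e_1$, so $G^{(1)}e_1=G_1e_1$. By construction $G_1$ annihilates the $(2,1)$-entry of $B_kB_k^{\mathrm T}-\lambda_1^2I_{k+1}$; since $B_kB_k^{\mathrm T}$ is tridiagonal, the first column of $B_kB_k^{\mathrm T}-\lambda_1^2I_{k+1}$ is supported on its first two entries, so $G_1^{\mathrm T}(B_kB_k^{\mathrm T}-\lambda_1^2I_{k+1})e_1=\rho e_1$ with $|\rho|=\|(B_kB_k^{\mathrm T}-\lambda_1^2I_{k+1})e_1\|$, that is, $G^{(1)}e_1=\rho^{-1}(B_kB_k^{\mathrm T}-\lambda_1^2I_{k+1})e_1$. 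Now I would subtract $\lambda_1^2U_{k+1}$ from the second relation in \eqref{relation to Lanczos}, namely $Q_AQ_A^{\mathrm T}U_{k+1}=U_{k+1}B_kB_k^{\mathrm T}+\alpha_{k+1}Q_Av_{k+1}e_{k+1}^{\mathrm T}$, and right-multiply by $e_1$; since $e_{k+1}^{\mathrm T}e_1=0$ the rank-one term drops out, giving
$$(Q_AQ_A^{\mathrm T}-\lambda_1^2I_m)u_1=U_{k+1}(B_kB_k^{\mathrm T}-\lambda_1^2I_{k+1})e_1=\rho\,U_{k+1}G^{(1)}e_1=\rho\,\dot u_1.$$
As $\dot u_1$ has unit norm, $|\rho|=\|(Q_AQ_A^{\mathrm T}-\lambda_1^2I_m)u_1\|$, and \eqref{expression of dot u1} follows.

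The main obstacle is not any single computation — identifying $\dot u_1$ is short once the second relation in \eqref{relation to Lanczos} is invoked — but rather keeping the bookkeeping clean: one must check that truncating \eqref{untruncated restarted lower bidiag} leaves a residual of exactly the form \eqref{dotrk-1}, which rests on the observation that $e_{k+1}^{\mathrm T}G^{(1)}=e_{k+1}^{\mathrm T}G_k$ is supported on the last two coordinates, so that the truncated residual is rank one and sits in the last column only; and one must make the appeal to the essential uniqueness of Lanczos bidiagonalization precise enough to conclude that \eqref{restarted lower bidiag} is literally the process started from $\dot u_1$, rather than merely something that looks like a Lanczos bidiagonalization.
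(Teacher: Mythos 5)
Your proof is correct and follows essentially the same route as the paper: both arguments rest on the second relation in \eqref{relation to Lanczos}, the identification $\dot u_1=U_{k+1}G^{(1)}e_1$, and the fact that the rank-one term $\alpha_{k+1}Q_Av_{k+1}e_{k+1}^{\mathrm T}$ does not touch the first column. The only (harmless) difference is that you compute $G^{(1)}e_1=G_1e_1$ directly from the action of the first Givens rotation on the tridiagonal first column, whereas the paper invokes the QR factorization $B_kB_k^{\mathrm T}-\lambda_1^2I_{k+1}=G^{(1)}R^{(1)}$ and reads off the first column via $e_1^{\mathrm T}R^{(1)}e_1$.
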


\begin{proof}
By $\dot{V}_k=V_kP^{(1)}$ and (\ref{dotrk-1}), we have $\dot V_{k-1}^{\mathrm{T}}\dot r_{k-1}=0$. This means that \eqref{restarted lower bidiag} is an $(k-1)$-step Lanczos lower bidiagonalization process
of $Q_A$.

According to the afore-described implicit QR iteration of $B_k$ with the nonnegative shift $\lambda_1$, we know that
$G^{(1)}$ is the Q-factor in the QR factorization of $B_kB_k^{\mathrm{T}}-\lambda_1^2I_{k+1}$.
Write $B_kB_k^{\mathrm T}-\lambda_1^2I_{k+1}=G^{(1)}R^{(1)}$. From the second relation in (\ref{relation to
Lanczos}), we obtain
$$
\begin{aligned}
\left(Q_A Q_A^{\mathrm T}-\lambda_1^2 I_{m}\right) U_{k+1} & =U_{k+1}\left(B_k B_k^{\mathrm T}-\lambda_1^2 I_{k+1}\right)+\alpha_{k+1} Q_{A} v_{k+1} e_{k+1}^{\mathrm T} \\
& =U_{k+1} G^{(1)} R^{(1)}+\alpha_{k+1} Q_A v_{k+1} e_{k+1}^{\mathrm T}.
\end{aligned}
$$
Taking the first columns in the two hand sides gives $\left(Q_A Q_A^{\mathrm T}-\lambda_1^2 I_{m}\right) u_1=\left(e_1^{\mathrm T}R^{(1)}e_1\right) \dot{u}_1$. Therefore,
$$
\dot u_1=\pm\frac{\left(Q_A Q_A^{\mathrm T}-\lambda_1^2 I_{m}\right) u_1}{\|\left(Q_A Q_A^{\mathrm T}-\lambda_1^2 I_{m}\right) u_1\|}.
$$

\end{proof}

We can use the above theorem to establish the following result, which is crucial to 
implicitly restart Lanczos bidiagonalization of $Q_L$ in 
the next subsection.

\begin{theorem}
\label{Th:one-step restarted Lanczos}
The afore-described implicit restart process implements one-step QR iteration of $B_k^{\mathrm{T}}B_k$
with the shift $\lambda_1$, the updated 
\begin{equation}
\label{expression of dot v1}
\dot v_1=\pm\frac{\left(Q_A^{\mathrm T} Q_A-\lambda_1^2 I_{n}\right) v_1}{\|\left(Q_A^{\mathrm T} Q_A-\lambda_1^2 I_{n}\right) v_1\|}, 
\end{equation}
and $P^{(1)}$ is the Q-factor in the QR factorization of $B_k^{\mathrm{T}}B_k-\lambda_1^2I_k$.
\end{theorem}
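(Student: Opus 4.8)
The plan is to transfer the implicit-QR statements already established for $B_kB_k^{\mathrm T}$ (Theorem~\ref{Th:restarted lower bidiag}) over to $B_k^{\mathrm T}B_k$ by exploiting the two-sided nature of the bulge-chasing process, which produces simultaneously the left transformation $G^{(1)}$ and the right transformation $P^{(1)}$ with $\dot B_k=G^{(1)\mathrm T}B_kP^{(1)}$. First I would observe that the claim about $P^{(1)}$ being the Q-factor of the QR factorization of $B_k^{\mathrm T}B_k-\lambda_1^2I_k$ is essentially the ``other half'' of the standard equivalence between the implicit single-shift QR step on $B_kB_k^{\mathrm T}$ and the one on $B_k^{\mathrm T}B_k$: since $\dot B_k^{\mathrm T}\dot B_k=P^{(1)\mathrm T}(B_k^{\mathrm T}B_k)P^{(1)}$ is again unreduced tridiagonal (its off-diagonal entries are the $\dot\alpha_i\dot\beta_{i+1}$, which are nonzero because $\dot B_k$ is lower bidiagonal with nonzero entries by the no-breakdown assumption), and since by construction the first column of $P^{(1)}$ is parallel to the first column of $B_k^{\mathrm T}B_k-\lambda_1^2I_k$ (this is exactly what the first Givens rotation $P_1$ on columns $1,2$ arranges, because $G_1^{\mathrm T}$ acts only on rows $1,2$ and hence $G_1^{\mathrm T}B_k$ still has its first column equal to that of $B_k$ except in position $2$, so the column annihilated by $P_1$ is the first column of $B_k^{\mathrm T}B_k-\lambda_1^2 I_k$), the implicit-Q theorem forces $P^{(1)}$ to coincide, up to signs, with the explicit QR factor. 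Then I would invoke the standard fact that this implicit tridiagonal QR step with shift $\lambda_1^2$ on $B_k^{\mathrm T}B_k$ is precisely the implicit QR step on $B_k$ regarded as a Lanczos upper bidiagonalization factor, i.e., ``one-step QR iteration of $B_k^{\mathrm T}B_k$ with shift $\lambda_1$'' in the terminology of the paper.

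The expression \eqref{expression of dot v1} for $\dot v_1$ then follows by the same argument as in the proof of Theorem~\ref{Th:restarted lower bidiag}, now applied to the first relation in \eqref{relation to Lanczos} rather than the second. Writing $B_k^{\mathrm T}B_k-\lambda_1^2I_k=P^{(1)}R$ for the QR factorization just identified, the first relation in \eqref{relation to Lanczos} gives
\begin{equation*}
\left(Q_A^{\mathrm T}Q_A-\lambda_1^2 I_n\right)V_k = V_k\left(B_k^{\mathrm T}B_k-\lambda_1^2 I_k\right)+\alpha_{k+1}\beta_{k+1}v_{k+1}e_k^{\mathrm T} = \dot V_k R + \alpha_{k+1}\beta_{k+1}v_{k+1}e_k^{\mathrm T},
\end{equation*}
since $\dot V_k=V_kP^{(1)}$. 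Taking first columns and using that $e_k^{\mathrm T}e_1=0$ for $k\geq 2$ yields $\left(Q_A^{\mathrm T}Q_A-\lambda_1^2I_n\right)v_1=\left(e_1^{\mathrm T}Re_1\right)\dot v_1$, which is \eqref{expression of dot v1} after normalization and absorbing the sign of $e_1^{\mathrm T}Re_1$.

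The main obstacle I anticipate is the bookkeeping needed to make the implicit-Q identification of $P^{(1)}$ airtight: one must check carefully that the first column of $B_k^{\mathrm T}B_k-\lambda_1^2I_k$ is genuinely what the first right-rotation $P_1$ annihilates against, tracking how $G_1^{\mathrm T}$ (acting on rows $1,2$) perturbs the relevant columns of $B_k$ before $P_1$ is applied, and that $\dot B_k^{\mathrm T}\dot B_k$ remains unreduced so the implicit-Q uniqueness theorem applies. This is the same structural fact that underlies the equivalence of Golub--Kahan implicit bidiagonal QR with tridiagonal QR on either $B_k^{\mathrm T}B_k$ or $B_kB_k^{\mathrm T}$; I would either cite this equivalence directly (e.g., from \cite{GolubandVan2013matrix}) or spell out the one-rotation computation. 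Everything else—the consistency of signs, the normalization—is routine and parallels the already-given proof of Theorem~\ref{Th:restarted lower bidiag}.
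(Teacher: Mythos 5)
Your proposal is correct in substance but runs in the opposite direction from the paper's proof. The paper first obtains \eqref{expression of dot v1} almost for free: taking first columns in the truncated recurrence \eqref{restarted lower bidiag} gives $\dot v_1=Q_A^{\mathrm T}\dot u_1/\|Q_A^{\mathrm T}\dot u_1\|$, and inserting the already-proved formula \eqref{expression of dot u1} for $\dot u_1$ (together with $Q_A^{\mathrm T}(Q_AQ_A^{\mathrm T}-\lambda_1^2I_m)=(Q_A^{\mathrm T}Q_A-\lambda_1^2I_n)Q_A^{\mathrm T}$ and $v_1\propto Q_A^{\mathrm T}u_1$) yields \eqref{expression of dot v1}; the QR-iteration interpretation is then read off from the polynomial-filter form of $\dot v_1$ and the first relation in \eqref{relation to Lanczos}, and the claim about $P^{(1)}$ is stated tersely from $\dot B_k^{\mathrm T}\dot B_k=P^{(1)\mathrm T}B_k^{\mathrm T}B_kP^{(1)}$. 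You instead establish the $P^{(1)}$ claim first, via the implicit-Q theorem and a first-column identification, and only then derive \eqref{expression of dot v1} from the factorization $B_k^{\mathrm T}B_k-\lambda_1^2I_k=P^{(1)}R$ and the first relation in \eqref{relation to Lanczos}, mirroring on the right side exactly what the paper does on the left side in Theorem~\ref{Th:restarted lower bidiag}. Both routes are valid; yours is self-contained on the $V$-side and makes explicit the implicit-Q dependence that the paper leaves unstated, while the paper's is shorter because it leverages the $\dot u_1$ formula it has just proved. Two points in your plan need the care you already flag: the parenthetical justification of the first-column condition is loose as written (what $P_1$ annihilates is the $(1,2)$ entry of $G_1^{\mathrm T}B_k$, and one must check that the row $(c\alpha_1+s\beta_2,\ s\alpha_2)$ determining $P_1$ is proportional to $(\alpha_1^2+\beta_2^2-\lambda_1^2,\ \alpha_2\beta_2)$, which indeed holds with proportionality factor $\alpha_1$; alternatively, taking first columns in $(B_k^{\mathrm T}B_k-\lambda_1^2I_k)B_k^{\mathrm T}=B_k^{\mathrm T}G^{(1)}R^{(1)}=P^{(1)}\dot B_k^{\mathrm T}R^{(1)}$ gives $P^{(1)}e_1\propto(B_k^{\mathrm T}B_k-\lambda_1^2I_k)e_1$ with no entrywise bookkeeping); and the unreducedness of $\dot B_k^{\mathrm T}\dot B_k$ can fail precisely in the exact-shift case, where the last off-diagonal deflates, so the implicit-Q identification should be phrased as holding up to the point of deflation — a caveat the paper's terse final step shares, and one that does not affect the first column and hence not \eqref{expression of dot v1}.
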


\begin{proof}
Taking the first columns in the second relation of (\ref{restarted lower bidiag}) and noticing that $\dot B_{k-1}^{\mathrm T}e_1$ only has nonzero entry in the first position, we obtain
$$
\dot v_1=\frac{Q_A^{\mathrm T}\dot u_1}{\|Q_A^{\mathrm T}\dot u_1\|}.
$$
Then (\ref{expression of dot v1}) holds because of (\ref{expression of dot u1}).
The first relation in (\ref{relation to Lanczos}) is the symmetric Lanczos process of $Q_A^{\mathrm{T}}Q_A$ with the starting vector $v_1$. From relationship (\ref{expression of dot v1}) between $\dot v_1$ and $v_1$, we have also proved that the described bulge-chasing process on $B_k$ implements one-step implicit QR iteration of $B_k^{\mathrm{T}}B_k$ with the shift $\lambda_1^2$. It follows from $\dot B_k^{\mathrm{T}}\dot B_k=P^{(1)\mathrm{T}}B_k^{\mathrm{T}}B_kP^{(1)}$ that the Q-factor in the QR factorization of $B_k^{\mathrm{T}}B_k-\lambda_1^2I_k$ is $P^{(1)}$.
\end{proof}

\subsection{One-step implicit restart of $Q_L$}
Let $\mu_1\geqslant 0$ satisfy $\lambda_1^2+\mu_1^2=1$, and consider one implicit QR step on $\bar B_k^\mathrm{T}\bar B_k$ with the shift $\mu_1^2$. We implement this process by working on $\bar B_k$ directly without forming $\bar B_k^{\mathrm{T}}\bar B_k$ explicitly, analogously to what we have done on $B_k$ in the last subsection.
$$
\resizebox{\textwidth}{!}{$
\begin{aligned}
&\bar P_1^{\mathrm T}\left(\bar B_k^{\mathrm T}\bar B_k-\mu_1^2I_{k}\right)=\left(\begin{array}{cccc}
* & * & + & \\
0 & * & * & \\
& * & * & * \\
& & * & *
\end{array}\right) \\
&\begin{aligned}
\bar B_k=&\left(\begin{array}{cccc}
* & * & & \\
& * & * & \\
& & * &* \\
& & & * \\
\end{array}\right) \stackrel{\bar P_1=\operatorname{Givens}(1,2)}{\longrightarrow}\left(\begin{array}{cccc}
* & * & & \\
+ & * & * & \\
& & * & * \\
& & & * \\
\end{array}\right) \stackrel{\bar G_1^{\mathrm{T}}=\operatorname{Givens}(1,2)}{\longrightarrow}\left(\begin{array}{cccc}
* & * & + & \\
0 & * & * & \\
& & * & * \\
& & & * \\
\end{array}\right) \\
& \stackrel{\bar P_2=\operatorname{Givens}(2,3)}{\longrightarrow}\left(\begin{array}{cccc}
* &* &0 & \\
 & * & * & \\
 & + & * &*\\
& & & * \\
\end{array}\right) \stackrel{\bar G_2^{\mathrm T}=\operatorname{Givens}(2,3)}{\longrightarrow}\left(\begin{array}{cccc}
* &* & & \\
 & * & * &+ \\
& 0 & * &*\\
& & & * \\
\end{array}\right) \stackrel{\bar P_3=\operatorname{Givens}(3,4)}{\longrightarrow} \\
& \left(\begin{array}{cccc}
* & *& & \\
 & * &* &0 \\
&  & * & *\\
& & +& * \\
\end{array}\right) \stackrel{\bar G_3^{\mathrm T}=\operatorname{Givens}(3,4)}{\longrightarrow}\left(\begin{array}{cccc}
* & *& & \\
& * & *& \\
&  & * & *\\
& & 0 & * \\
\end{array}\right)=:\widetilde{B}_k
\end{aligned}
\end{aligned}
$}
$$

Let $\bar P^{(1)}=\bar P_1\cdots \bar P_{k-1}$. Then $\bar P^{(1)}$ is the Q-factor in the
QR factorization of $\bar B_k^{\mathrm{T}}\bar B_k-\mu_1^2I_k$. From the key relationship (\ref{BTB+barBTbarB=I}) between $B_k$ and $\bar B_k$, we have
$$\bar B_k^{\mathrm{T}}\bar B_k-\mu_1^2I_k=-(B_k^{\mathrm{T}}B_k-\lambda_1^2I_k).$$
Therefore, the implicit QR iteration of $\bar B_k^{\mathrm{T}}\bar B_k-\mu_1^2I_k$ and that of $B_k^{\mathrm{T}}B_k-\lambda_1^2I_k$ are identical. By the implicit Q-theorem \cite[pp. 460]{GolubandVan2013matrix}, we obtain $\bar P_i=P_i$ for $i=1,\,2,\,\dots,k-1$, and thus $\bar P^{(1)}=P^{(1)}$. Let $\bar G^{(1)}=\bar G_1\cdots \bar G_{k-1}$, and $\check U_{k}=\widehat U_{k}\bar G^{(1)}$. Then $\widetilde B_k=\bar G^{(1)\mathrm T}\bar B_kP^{(1)}$. From \eqref{Lanczos bidiagonalization} and \eqref{bkbar}, we obtain
\begin{equation}
\label{untruncated restarted upper bidiag}
Q_L \dot{V}_k=\check{U}_k \widetilde{B}_k, \quad Q_L^{\mathrm T} \check{U}_k=\dot{V}_k \widetilde{B}_k^{\mathrm T}+\bar{\beta}_k v_{k+1} e_k^{\mathrm T} \bar G_k^{(1)},
\end{equation}
where $\dot{V}_k$ is the same as that in (\ref{untruncated restarted lower bidiag}) and $\breve{U}_k=(\breve{u}_1,\ldots,\breve{u}_k)$.

Let $\check U_{k-1}$ be the matrix consisting of the first $k-1$ columns of $\check U_k$. Let $\widetilde B_{k-1}$ be the leading $(k-1)\times (k-1)$ submatrix of $\widetilde B_k$ and $\tilde \beta_{k-1}=e_{k-1}^{\mathrm{T}}\widetilde B_k e_k$. Note that $e_{k}^{\mathrm T}\bar G^{(1)}=e_{k}^{\mathrm T}\bar G_{k-1}$ has nonzero entries only in the last two positions. Therefore, 
by truncating (\ref{untruncated restarted upper bidiag}), we obtain
\begin{equation}
\label{restarted upper bidiag}
Q_L \dot{V}_{k-1}=\check{U}_{k-1} \widetilde{B}_{k-1}, \quad Q_L^{\mathrm T} \check{U}_{k-1}=\dot{V}_{k-1} \widetilde{B}_{k-1}^{\mathrm T}+\tilde r_{k-1}e_{k-1}^{\mathrm{T}},
\end{equation}
where $\tilde r_{k-1}=\bar\beta_k\left(e_k^{\mathrm{T}}\bar G^{(1)}e_{k-1}\right)v_{k+1}+\tilde\beta_{k-1}\dot v_k$.

\begin{theorem}
Relation {\rm(\ref{restarted upper bidiag})} is the implicitly restarted $(k-1)$-step Lanczos upper bidiagonalization process of $Q_L$ with the nonnegative shift $\mu_1$ and the updated 
starting vector $\dot v_1$ defined as in \eqref{expression of dot v1}.
\end{theorem}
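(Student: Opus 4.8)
The plan is to prove three assertions, in this order: (a) relation \eqref{restarted upper bidiag} has the structure of a $(k-1)$-step Lanczos upper bidiagonalization of $Q_L$; (b) the bulge-chasing passage $\bar B_k\mapsto\widetilde B_k$ realizes one implicit QR step on $\bar B_k^{\mathrm T}\bar B_k$ with the shift $\mu_1^2$ (consistent with calling $\mu_1$ the ``shift'' of the restarted process, in the same convention as \Cref{Th:restarted lower bidiag} and \Cref{Th:one-step restarted Lanczos}); and (c) the updated right starting vector of the process is exactly the $\dot v_1$ of \eqref{expression of dot v1}. Almost everything needed is already in hand from the construction of \eqref{untruncated restarted upper bidiag}--\eqref{restarted upper bidiag} and from the identity $\bar P^{(1)}=P^{(1)}$, so the argument is short and closely parallels the proofs of \Cref{Th:restarted lower bidiag}--\Cref{Th:one-step restarted Lanczos}.

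For (a) I would first record that $\check U_k=\widehat U_k\bar G^{(1)}$ is column orthonormal (since $\widehat U_k$ is column orthonormal and $\bar G^{(1)}$ is orthogonal), hence so is its leading block $\check U_{k-1}$; that $\widetilde B_{k-1}$ is upper bidiagonal, which is read off the bulge-chasing diagram; and that $\dot V_{k-1}$ is column orthonormal as a leading block of $\dot V_k=V_kP^{(1)}$. The only point requiring an argument is $\dot V_{k-1}^{\mathrm T}\tilde r_{k-1}=0$ with $\tilde r_{k-1}=\bar\beta_k\bigl(e_k^{\mathrm T}\bar G^{(1)}e_{k-1}\bigr)v_{k+1}+\tilde\beta_{k-1}\dot v_k$: the columns of $\dot V_{k-1}$ lie in $\operatorname{span}(V_k)$, which is orthogonal to $v_{k+1}$, while $\dot v_k\perp\dot V_{k-1}$ because $\dot V_k$ is orthonormal, so both terms of $\tilde r_{k-1}$ are orthogonal to $\dot V_{k-1}$. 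Together with the first relation of \eqref{restarted upper bidiag}, and the fact (already noted before the statement) that $e_k^{\mathrm T}\bar G^{(1)}$ has nonzeros only in its last two positions, this shows \eqref{restarted upper bidiag} may be continued as a Lanczos upper bidiagonalization of $Q_L$, the next right vector being $\tilde r_{k-1}/\|\tilde r_{k-1}\|$.

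For (b) I would invoke the two facts established before the statement: $\bar P^{(1)}=P^{(1)}$, from the implicit Q-theorem applied to $\bar B_k^{\mathrm T}\bar B_k-\mu_1^2I_k=-(B_k^{\mathrm T}B_k-\lambda_1^2I_k)$, and the fact that $P^{(1)}$ is the Q-factor in the QR factorization of $\bar B_k^{\mathrm T}\bar B_k-\mu_1^2I_k$. Since $\widetilde B_k=\bar G^{(1)\mathrm T}\bar B_kP^{(1)}$ with $\bar G^{(1)}$ orthogonal, one has $\widetilde B_k^{\mathrm T}\widetilde B_k=P^{(1)\mathrm T}(\bar B_k^{\mathrm T}\bar B_k)P^{(1)}$, which is precisely the matrix produced by one implicit QR step on $\bar B_k^{\mathrm T}\bar B_k$ with shift $\mu_1^2$; passing from \eqref{untruncated restarted upper bidiag} to the truncated \eqref{restarted upper bidiag} is then the standard implicit-restart truncation. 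For (c) I would observe that $\dot V_k=V_kP^{(1)}$ is literally the same right-basis matrix appearing in the restarted lower bidiagonalization of $Q_A$ --- this is exactly what $\bar P^{(1)}=P^{(1)}$ buys us --- so $\dot v_1=V_kP^{(1)}e_1$ is the vector characterized in \Cref{Th:one-step restarted Lanczos}, namely \eqref{expression of dot v1}. If one prefers the filter written through $Q_L$, the CS relation $Q_A^{\mathrm T}Q_A+Q_L^{\mathrm T}Q_L=I_n$ from \eqref{CS of QA and QL} together with $\lambda_1^2+\mu_1^2=1$ gives $Q_A^{\mathrm T}Q_A-\lambda_1^2I_n=-(Q_L^{\mathrm T}Q_L-\mu_1^2I_n)$, so $\dot v_1=\pm(Q_L^{\mathrm T}Q_L-\mu_1^2I_n)v_1/\|(Q_L^{\mathrm T}Q_L-\mu_1^2I_n)v_1\|$, displaying $\dot v_1$ as the $\mu_1^2$-shift filter applied to the original right starting vector $v_1$ of $Q_L$'s upper bidiagonalization.

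I expect no serious obstacle, because the bulk of the work --- the bulge-chasing construction, the identity $\bar P^{(1)}=P^{(1)}$, and the starting-vector formula --- has already been done; relative to the $Q_A$ case of \Cref{Th:restarted lower bidiag}--\Cref{Th:one-step restarted Lanczos} the genuinely new content is only (i) the orthogonality check $\dot V_{k-1}^{\mathrm T}\tilde r_{k-1}=0$ after truncation and (ii) the remark that $\dot V_k$ is common to both restarted processes, which forces $\dot v_1$ to coincide with \eqref{expression of dot v1}. The mildly delicate point in (i) is to confirm that exactly the last columns of $\dot V_k$ and of $\check U_k$ are discarded, so that the surviving residual $\tilde r_{k-1}$ is a combination of $v_{k+1}$ and $\dot v_k$ alone and therefore lies in $\operatorname{span}(\dot V_{k-1})^{\perp}$.
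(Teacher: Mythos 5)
Your proposal is correct and follows essentially the same route as the paper: the paper's own proof consists precisely of noting $\dot V_{k-1}^{\mathrm T}\tilde r_{k-1}=0$ (via $\dot V_k=V_kP^{(1)}$, exactly your orthogonality check against $v_{k+1}$ and $\dot v_k$) and invoking the expression \eqref{expression of dot v1} for $\dot v_1$, with the shift interpretation and $\bar P^{(1)}=P^{(1)}$ already established in the preceding text. Your extra observation that the CS identity $Q_A^{\mathrm T}Q_A+Q_L^{\mathrm T}Q_L=I_n$ rewrites $\dot v_1$ as the $\mu_1^2$-shift filter in $Q_L^{\mathrm T}Q_L$ is a correct and pleasant clarification of why $\mu_1$ deserves to be called the shift, but it does not change the argument.
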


\begin{proof}
By $\dot{V}_k=V_kP^{(1)}$, it is straightforward that $\dot V_{k-1}^{\mathrm T}\tilde r_{k-1}=0$, which, together with the expression (\ref{expression of dot v1}) of $\dot v_1$, completes the proof.
\end{proof}

Since the orthonormal basis $\dot V_{k-1}$ of the right subspace in 
\eqref{restarted upper bidiag} after implicit restart is 
identical to that of $Q_A$ in (\ref{restarted lower bidiag}), the relations (\ref{restarted lower bidiag}) and (\ref{restarted upper bidiag}) carry out an implicitly restarted $(k-1)$-step joint Lanczos bidiagonalization processes of $Q_A$ and $Q_L$ with the shifts $\lambda_1$ and $\mu_1$, and the updated starting vector after restarting is $\dot u_1$.

\begin{remark}
Theoretically, exploiting the established relationship between the implicit restart of Lanczos lower and upper bidiagonalizations, there is no need to generate $P_i$ again when implicitly restarting the  Lanczos upper bidiagonalization process of $Q_L$ because the implicit restart of the Lanczos lower bidiagonalization process of $Q_A$ has generated $P_i,\ i=1,\dots,k-1$. Therefore, we can apply $P_i$ to $\bar B_k$ in turn directly, and at application step of $P_i$, we only need to calculate the Givens matrices $\bar G_i$ to annihilate those subdiagonal entries newly generated.
\end{remark}

\begin{remark}\label{finite}
In finite precision arithmetic,
due to rounding errors, when applying $P_i$ in the above way, those already
annihilated entries during the bulge-chasing process are now at the level of $\epsilon$ rather than exact zeroes, causing that the resulting matrix is not an exact upper bidiagonal matrix. To this end, a
computationally reliable and accurate way is to set the $O(\epsilon)$ entries that are not on the
diagonal and the upper-diagonal to zeroes, and thus obtains an upper bidiagonal matrix. Very importantly,
this approach guarantees that the orthonormal bases of the right subspaces generated by the two implicit
restart processes of Lanczos lower and upper bidiagonalizations are always identical and the JBD method
compute unique approximate right generalized singular vectors and, in the meantime, that the relations
of the Lanczos bidiagonalization process (\ref{restarted upper bidiag}) of $Q_L$ hold at the level of
$\epsilon$.
\end{remark}

Keep in mind the connection of \eqref{Lanczos bidiagonalization} and \eqref{JBD}, \eqref{BTB+barBTbarB=I}. 
The implicitly restarted Lanczos lower and upper bidiagonalization processes (\ref{restarted lower bidiag}) and (\ref{restarted upper bidiag}) 
lead to the implicitly restarted $(k-1)$-step \Cref{alg:JBD} with the updated starting vector $\dot u_1$:
\begin{equation}
\label{one-step restarted JBD}
    \begin{aligned}
    & \left(I_m, 0_{m,p}\right) \dot V_{k-1}^{\prime}=\dot U_{k} \dot B_{k-1}, \\
    & Q Q^{\mathrm T}\left(\begin{array}{c}
    \dot U_{k} \\
    0_{p,k}
    \end{array}\right)=\dot V_{k-1}^{\prime} \dot B_{k-1}^{\mathrm T}+\dot r_{k-1}^{\prime} e_{k+1}^{\mathrm T}, \\
    & \left(0_{p,m}, I_p\right) \dot V_{k-1}^{\prime} D_{k-1}=\check{U}_{k-1} \check{B}_{k-1},
    \end{aligned}
\end{equation}
where $\dot V_{k-1}^{\prime}=Q\dot V_{k-1}$, $\dot r_{k-1}^{\prime}=Q\dot r_{k-1}$, and $D_{k-1}=\operatorname{diag}(1,-1,\dots,(-1)^{k-2})\in\mathbb{R}^{k\times k}$, i.e., 
\begin{equation}
\label{AH=UBcheck}
\begin{aligned}
&    A \dot H_{k-1}=\dot U_{k} \dot B_{k-1},\quad L\dot H_{k-1}=\check{U}_{k-1} \check{B}_{k-1},\\
&    \dot B_{k-1}^{\mathrm T} \dot B_{k-1}+\check{B}_{k-1}^{\mathrm T} \check{B}_{k-1}=I_{k-1}.
\end{aligned}
\end{equation}
where $\dot H_{k-1}=R^{-1}\dot V_{k-1}$.

\subsection{Multi-step implicit restart of the JBD process}
\label{sec:implicit restart}
Given integers $k$ and $l$ with $l<k$, we next
consider $(k-l)$-step implicit restarts of the JBD process. For the $k-l$ pairs $\{\lambda_i^2$, $\mu_i^2\}$ of shifts satisfying $\lambda_i^2+\mu_i^2=1$, $i=1,2,\ldots,k-l$, we can do $(k-l)$ implicit QR steps on $B_kB_k^{\mathrm T}$ and $\bar B_k^{\mathrm T}\bar B_k$ with the shifts $\lambda_1^2,\dots,\,\lambda_{k-l}^2$ and $\mu_1^2,\dots,\,\mu_{k-l}^2$ in turn in the preceding way, respectively. Then we obtain the orthogonal matrices $\{G^{(i)}\}_{i=1}^{k-l},\,\{P^{(i)}\}_{i=1}^{k-l}$, and $\{\bar G^{(i)}\}_{i=1}^{k-l}$. Define \begin{equation}
\label{G,Gbar,P}
G=G^{(1)}\cdots G^{(k-l)},\,P=P^{(1)}\cdots P^{(k-l)},\,\bar G=\bar G^{(1)}\cdots \bar G^{(k-l)}
\end{equation}
and
\begin{equation}\label{updated}
B_k^+=G^{\mathrm T}B_kP,\ \ V_k^+=V_kP,\,U_{k+1}^+=U_{k+1}G,\ \ \bar B_k^+=\bar G^{\mathrm T}\bar B_k P,\ \ \widehat U_k^+=\widehat U_k\bar G.
\end{equation}
Then from \eqref{Lanczos bidiagonalization} and \eqref{bkbar} we obtain
\begin{equation}
\label{untruncated restarted joint bidiag}
    \begin{aligned}
    & Q_A V_k^+=U_{k+1}^+ B_k^+, \quad Q_A^{\mathrm T} U_{k+1}^+=V_k^+ B_k^{+\mathrm T}+\alpha_{k+1} v_{k+1} e_{k+1}^{\mathrm T} G, \\
    & Q_L V_k^+=\widehat{U}_k^+ \bar{B}_k^+, \quad Q_L^{\mathrm T} \widehat{U}_k^+=V_k ^+\bar{B}_k^{+\mathrm T}+\bar{\beta}_k v_{k+1} e_k^{\mathrm T}\bar G,
    \end{aligned}
\end{equation}
where $B_k^+$ and $\bar B_k^+$ are lower and upper bidiagonal, respectively, and still
satisfy the identity
$$
B_k^{+\mathrm T}B_k^++\bar B_k^{+\mathrm T}\bar B_k^+=I_k.
$$

Based on the above, we can obtain the implicitly restarted $l$-step JBD (IRJBD) process, as the following theorem shows.
\begin{theorem}
Let $V_l^+,\,U_{l+1}^+$ and $\widehat U_l^+$ be the matrices consisting of the first $l,\,l+1$ and $l$ columns of $V_k^+,\,U_{k+1}^+$ and $\widehat U_k^+$, and $B_{l}^+$ and $\bar B_{l}^+$  the leading $(l+1)\times l$ and $l\times l$ submatrices of $B_k^+$ and $\bar B_k^+$ in \eqref{updated}, respectively. Define $\alpha_{l+1}^+=e_{l+1}^{\mathrm T}B_k^+e_{l+1},\,\bar\beta_l^+=e_l^{\mathrm T}\bar B_k^+e_{l+1}$. Then there is the implicitly restarted $l$-step joint Lanczos bidiagonalization processes of $Q_A$ and Q$_L$ with the nonnegative shifts $\{\lambda_i\}_{i=1}^{k-l}$ and $\{\mu_i\}_{i=1}^{k-l}$ satisfying $\lambda_i^2+\mu_i^2=1$:
\begin{equation}
\label{restarted joint bidiag}
\begin{aligned}
& Q_A V_l^{+}=U_{l+1}^{+} B_l^{+}, \quad Q_A^{\mathrm T} U_{l+1}^{+}=V_l^{+} B_l^{+\mathrm T}+r_l^+ e_{l+1}^{\mathrm T}, \\
& Q_L V_l^{+}=\widehat{U}_l^{+} \widehat{B}_l^{+}, \quad Q_L^{\mathrm T} \widehat{U}_l^{+}=V_l^{+} \bar{B}_l^{+\mathrm T}+\bar r_l^+ e_l^{\mathrm T}, \\
\end{aligned}
\end{equation}
where 
\begin{equation}\label{quanupdate}
r_l^{+}=\alpha_{k+1} \left(e_{k+1}^{\mathrm T}Ge_{l+1}\right) v_{k+1}+\alpha_{l+1}^{+} v_{l+1}^{+},\ \  \bar r_l^+=\bar{\beta}_k \left(e_k^{\mathrm T}\bar Ge_l\right)v_{k+1}+\bar{\beta}_l^{+} v_{l+1}^{+},
\end{equation}
and the updated starting vectors $u_1^+=U_{l+1}^+e_1$ and $v_1^+=V_l^+e_1$ satisfy
\begin{equation}
\label{initial vector}
\begin{aligned}
&\tau u_1^+=\left(Q_A Q_A^{\mathrm T}-\lambda_1^2I_m\right)\cdots\left(Q_AQ_A^{\mathrm T}-\lambda_{k-l}^2I_m\right)u_1,\\
&\rho v_1^+=\left(Q_A^{\mathrm T}Q_A-\lambda_1^2I_n\right)\cdots\left(Q_A^{\mathrm T}Q_A-\lambda_{k-l}^2I_n\right)v_1
\end{aligned}
\end{equation}
with $\tau$ and $\rho$ being normalizing factors.
\end{theorem}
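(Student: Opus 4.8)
The plan is to obtain \eqref{restarted joint bidiag} by truncating the full relations \eqref{untruncated restarted joint bidiag}, in exact analogy with how \eqref{restarted lower bidiag} and \eqref{restarted upper bidiag} were derived in the one-step case, and then to read off the tail vectors in \eqref{quanupdate} and the updated starting vectors in \eqref{initial vector} from the accumulated orthogonal transformations $G,\bar G,P$ and the updated bidiagonal factors defined in \eqref{updated}. The one structural fact that makes the truncation legitimate is a support statement for $e_{k+1}^{\mathrm T}G$ and $e_k^{\mathrm T}\bar G$. Each sweep $G^{(j)}$ is a product $G_1^{(j)}\cdots G_k^{(j)}$ of plane rotations with $G_i^{(j)}$ acting on coordinates $i,i+1$, so right multiplication by $G^{(j)}$ sends a row vector supported on $\{s,\dots,k+1\}$ to one supported on $\{s-1,\dots,k+1\}$. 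Since $e_{k+1}^{\mathrm T}G^{(1)}=e_{k+1}^{\mathrm T}G_k^{(1)}$ is supported on $\{k,k+1\}$, an induction over the $k-l$ sweeps shows $e_{k+1}^{\mathrm T}G$ is supported on $\{l+1,\dots,k+1\}$; the same argument gives that $e_k^{\mathrm T}\bar G$ is supported on $\{l,\dots,k\}$ (and, although not needed, that $G,\bar G,P$ are banded with lower bandwidth $k-l$).

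With this in hand, taking the first $l$ columns of $Q_AV_k^+=U_{k+1}^+B_k^+$ and using that $B_k^+$ is lower bidiagonal yields $Q_AV_l^+=U_{l+1}^+B_l^+$; taking the first $l+1$ columns of the second relation of \eqref{untruncated restarted joint bidiag}, the tail $\alpha_{k+1}v_{k+1}e_{k+1}^{\mathrm T}G$ contributes nothing to columns $1,\dots,l$ and contributes only $\alpha_{k+1}(e_{k+1}^{\mathrm T}Ge_{l+1})\,v_{k+1}e_{l+1}^{\mathrm T}$ to column $l+1$. Merging this with the term $\alpha_{l+1}^+v_{l+1}^+e_{l+1}^{\mathrm T}$ that the bidiagonal $B_k^{+\mathrm T}$ spills into column $l+1$ of $V_k^+B_k^{+\mathrm T}$ gives $Q_A^{\mathrm T}U_{l+1}^+=V_l^+B_l^{+\mathrm T}+r_l^+e_{l+1}^{\mathrm T}$ with $r_l^+$ exactly as in \eqref{quanupdate}. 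Truncating the last two relations of \eqref{untruncated restarted joint bidiag} to the first $l$ columns of $\widehat U_k^+$ and using the support of $e_k^{\mathrm T}\bar G$ produces the $Q_L$ pair of \eqref{restarted joint bidiag} and $\bar r_l^+$ in the same way.

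Next I would check that \eqref{restarted joint bidiag} really is a pair of $l$-step Lanczos bidiagonalizations sharing one right subspace. Column orthonormality of $U_{l+1}^+$ and $\widehat U_l^+$ is inherited from $U_{k+1}^+=U_{k+1}G$ and $\widehat U_k^+=\widehat U_k\bar G$ with $G,\bar G$ orthogonal; likewise $v_{l+1}^+\perp\operatorname{span}(V_l^+)$ since $V_k^+=V_kP$ is column orthonormal, and $v_{k+1}\perp\operatorname{span}(V_k)=\operatorname{span}(V_k^+)\supseteq\operatorname{span}(V_l^+)$, so $V_l^{+\mathrm T}r_l^+=V_l^{+\mathrm T}\bar r_l^+=0$. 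Finally, the implicit QR sweeps on $\bar B_k^{\mathrm T}\bar B_k$ with shifts $\mu_i^2$ coincide with those on $B_k^{\mathrm T}B_k$ with shifts $\lambda_i^2$: this is the multi-step iterate of $\bar B_k^{\mathrm T}\bar B_k-\mu^2I=-(B_k^{\mathrm T}B_k-\lambda^2I)$ together with the implicit $Q$ theorem, exactly as established just before \Cref{finite}, so $\bar B_k^+=\bar G^{\mathrm T}\bar B_kP$ uses the \emph{same} $P$ and the right-subspace basis $V_l^+$ is common to both pairs. Hence \eqref{restarted joint bidiag} is an implicitly restarted $l$-step joint Lanczos bidiagonalization of $Q_A$ and $Q_L$.

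For the updated starting vectors, set $\phi(\theta)=\prod_{i=1}^{k-l}(\theta-\lambda_i^2)$ and iterate the second relation of \eqref{relation to Lanczos}: one gets $\phi(Q_AQ_A^{\mathrm T})U_{k+1}=U_{k+1}\phi(B_kB_k^{\mathrm T})+W$, where every column of $W$ is a linear combination of $Q_Av_{k+1},(Q_AQ_A^{\mathrm T})Q_Av_{k+1},\dots$ times a row vector $e_{k+1}^{\mathrm T}\psi(B_kB_k^{\mathrm T})$ with $\deg\psi\le k-l-1$. Since $B_kB_k^{\mathrm T}$ is tridiagonal and $k-l-1<k$ (using $l\ge1$), each such row vector annihilates $e_1$, so $We_1=0$. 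The accumulated implicit QR sweeps factor $\phi(B_kB_k^{\mathrm T})=GR$ with $R$ upper triangular (extending the single-sweep statements in the proofs of \Cref{Th:restarted lower bidiag} and \Cref{Th:one-step restarted Lanczos}), so the first column gives $\phi(Q_AQ_A^{\mathrm T})u_1=U_{k+1}GRe_1=R_{11}U_{k+1}^+e_1=R_{11}u_1^+$, which is the first line of \eqref{initial vector}; the second line follows verbatim from the first relation of \eqref{relation to Lanczos} with $B_k^{\mathrm T}B_k$ tridiagonal and $P$ the $Q$-factor of $\phi(B_k^{\mathrm T}B_k)$. Equivalently, since the factors $Q_AQ_A^{\mathrm T}-\lambda_i^2I$ commute, one may simply iterate \Cref{Th:restarted lower bidiag} and \Cref{Th:one-step restarted Lanczos}. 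I expect the main obstacle to be the bookkeeping behind the first two paragraphs---pinning down the support of $G,\bar G,P$ and the ``spill-over'' of the bidiagonal factors under truncation, which is precisely what produces the closed forms of $r_l^+$ and $\bar r_l^+$ in \eqref{quanupdate}; everything else reduces to orthonormality and the identity $\phi(B_kB_k^{\mathrm T})=GR$.
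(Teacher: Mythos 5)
Your proposal is correct and takes essentially the same route as the paper: truncate \eqref{untruncated restarted joint bidiag} using the fact that $e_{k+1}^{\mathrm T}G$ and $e_k^{\mathrm T}\bar G$ are nonzero only in their last $k-l+1$ positions to get \eqref{restarted joint bidiag} with $r_l^+,\,\bar r_l^+$ as in \eqref{quanupdate}, verify $V_l^{+\mathrm T}r_l^+=V_l^{+\mathrm T}\bar r_l^+=0$ from $V_k^+=V_kP$, and realize \eqref{initial vector} as a polynomial filter applied to $u_1$ and $v_1$. The only (minor) variation is in \eqref{initial vector}, where the paper argues by induction on the number of shifts via \Cref{Th:restarted lower bidiag} and obtains $v_1^+$ from $v_1^+=Q_A^{\mathrm T}u_1^+/\|Q_A^{\mathrm T}u_1^+\|$, while your primary argument uses the accumulated QR factorization of $\prod_i\left(B_kB_k^{\mathrm T}-\lambda_i^2I\right)$ with the vanishing-correction observation $We_1=0$ (and you note the induction as an equivalent alternative); your write-up actually supplies the support/bookkeeping details that the paper's terse proof only asserts.
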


\begin{proof}
Note that $e_{k+1}^{\mathrm T}G$ and $e_k^{\mathrm T}\bar G$ has nonzero entries only in the last $k-l+1$ positions. Relation (\ref{restarted joint bidiag}) follows by truncating (\ref{untruncated restarted joint bidiag}). By $V_k^+=V_kP$, it is straightforward that $r_l^+$ and $\bar r_l^+$ are orthogonal to $V_l^+$. Therefore, (\ref{restarted joint bidiag}) is the implicitly restarted $l$-step joint bidiagonalization processes of $Q_A$ and $Q_L$ with the starting vectors $u_1^+$ and $v_1^+$, respectively.

Next we prove (\ref{initial vector}) by induction. Theorem \ref{Th:restarted lower bidiag} has proved the case $k-l=1$. Suppose the relation holds for $k-l=i$:
$$
\tau_i u_1^{(i)}=\left(Q_A Q_A^{\mathrm T}-\lambda_1^2I_m\right)\cdots\left(Q_AQ_A^{\mathrm T}-\lambda_i^2I_m\right)u_1,\\
$$
where $u_1^{(i)}=U_{k+1}G^{(1)}\cdots G^{(i)}e_1$ and $\tau_i$ is a normalizing factor. Define $u_1^{(i+1)}=U_{k+1}G^{(1)}\cdots G^{(i+1)}e_1$. By Theorem \ref{Th:restarted lower bidiag}, we have
$$
\begin{aligned}
u_1^{(i+1)}&=\gamma \left(Q_A Q_A^{\mathrm{T}}-\lambda_{i+1}^2 I_m\right) u_1^{(i)} \\
&=\gamma^{\prime}\left(Q_A Q_A^{\mathrm T}-\lambda_1^2I_m\right)\cdots\left(Q_AQ_A^{\mathrm T}-\lambda_{i+1}^2I_m\right)u_1,
\end{aligned}
$$
where $\gamma$ and $\gamma^{\prime}$ are normalizing factors, proving the
first relation of (\ref{initial vector}). On the other hand,  
\begin{eqnarray*}
v_1^+&=&\frac{Q_A^{\mathrm T}u_1^+}{\|Q_A^{\mathrm T}u_1^+\|}\\
&=&\frac{1}{\tau\|Q_A^{\mathrm T}u_1^+\|}\left(Q_A^{\mathrm T}Q_A-\lambda_1^2I_n\right)\cdots\left(Q_A^{\mathrm T}Q_A-\lambda_{k-l}^2I_n\right)Q_A^{\mathrm T}u_1\\
&=&\frac{1}{\rho}\left(Q_A^{\mathrm T}Q_A-\lambda_1^2I_n\right)\cdots\left(Q_A^{\mathrm T}Q_A-\lambda_{k-l}^2I_n\right)v_1. \hspace{2cm} \Box
\end{eqnarray*}

\end{proof}

Based on the relationships \eqref{Lanczos bidiagonalization} and \eqref{BTB+barBTbarB=I}
between two Lanczos bidiagonalizations and the JBD process, by \eqref{restarted joint bidiag}, we have come to our ultimate result of this section.
\begin{theorem}{restarted joint bidiag}
Let $H_l^+=R^{-1}V_l^+$. Then the preceding $(k-l)$-step implicit restart of joint
Lanczos lower and upper bidiagonalizations generates an implicitly restarted $l$-step JBD process:
\begin{equation}
\begin{aligned}
&AH_l^+=U_{l+1}^+B_l^+,\quad LH_l^+=\widehat U_l^+\bar B_l^+,\\
&B_l^{+\mathrm T}B_l^++\bar B_l^{+\mathrm T}\bar B_l^+=I_l.
\end{aligned}
\end{equation}
\end{theorem}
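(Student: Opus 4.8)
The plan is to obtain all three relations by passing the restarted joint Lanczos bidiagonalization relations \eqref{restarted joint bidiag} through the QR factorization \eqref{QR}, exactly as the unrestarted identities \eqref{BTB+barBTbarB=I} arise from the matrix form \eqref{JBD} of the JBD process. From \eqref{QR}, $A=Q_A R$ and $L=Q_L R$; by definition, $H_l^+=R^{-1}V_l^+$. Hence $AH_l^+=Q_A R R^{-1} V_l^+=Q_A V_l^+$ and $LH_l^+=Q_L V_l^+$. Substituting the two forward relations in \eqref{restarted joint bidiag}, namely $Q_A V_l^+=U_{l+1}^+ B_l^+$ and $Q_L V_l^+=\widehat U_l^+ \bar B_l^+$ (the latter being the truncation to the first $l$ columns of the third relation of \eqref{untruncated restarted joint bidiag}, using that the first $l$ columns of the upper bidiagonal $\bar B_k^+$ are supported on its first $l$ rows), immediately gives $AH_l^+=U_{l+1}^+ B_l^+$ and $LH_l^+=\widehat U_l^+ \bar B_l^+$, the first two claimed identities.

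For the third identity I would start from $B_k^{+\mathrm T}B_k^++\bar B_k^{+\mathrm T}\bar B_k^+=I_k$, recorded right after \eqref{untruncated restarted joint bidiag} (equivalently, a consequence of $B_k^+=G^{\mathrm T}B_k P$ and $\bar B_k^+=\bar G^{\mathrm T}\bar B_k P$ with $G,\bar G,P$ orthogonal, together with the identity $B_k^{\mathrm T}B_k+\bar B_k^{\mathrm T}\bar B_k=I_k$ from \eqref{BTB+barBTbarB=I}), and restrict it to its leading $l\times l$ principal block. Since the bulge-chasing of the previous subsections keeps $B_k^+$ lower bidiagonal, its first $l$ columns have nonzero entries only in the first $l+1$ rows, so the leading $l\times l$ block of $B_k^{+\mathrm T}B_k^+$ coincides with $B_l^{+\mathrm T}B_l^+$; symmetrically, $\bar B_k^+$ is upper bidiagonal, so the leading $l\times l$ block of $\bar B_k^{+\mathrm T}\bar B_k^+$ coincides with $\bar B_l^{+\mathrm T}\bar B_l^+$. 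Taking leading $l\times l$ blocks on both sides of $B_k^{+\mathrm T}B_k^++\bar B_k^{+\mathrm T}\bar B_k^+=I_k$ then yields $B_l^{+\mathrm T}B_l^++\bar B_l^{+\mathrm T}\bar B_l^+=I_l$, as claimed.

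The computation is essentially bookkeeping; the only point needing care, and the sole place where the argument could fail, is the last step, where the operations of taking the leading $l\times l$ principal block and of forming a Gram product $(\cdot)^{\mathrm T}(\cdot)$ must commute. This is false for arbitrary matrices and holds here purely because of the preserved lower/upper bidiagonal structure of $B_k^+$ and $\bar B_k^+$, which is exactly the reason the truncation was taken at column $l$ (and at row $l+1$ on the $Q_A$ side). I would therefore make this structural fact explicit before extracting the block; apart from that, the proof is a short substitution.
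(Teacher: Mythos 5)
Your proposal is correct and follows essentially the same route the paper takes: the paper presents the theorem as an immediate consequence of the restarted relations \eqref{restarted joint bidiag} combined with $A=Q_AR$, $L=Q_LR$ and $H_l^+=R^{-1}V_l^+$, together with the preserved identity $B_k^{+\mathrm T}B_k^++\bar B_k^{+\mathrm T}\bar B_k^+=I_k$ and truncation. Your added remark that the leading-block extraction commutes with the Gram products only because of the lower/upper bidiagonal structure of $B_k^+$ and $\bar B_k^+$ is a correct and useful elaboration of a step the paper leaves implicit.
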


This theorem forms the basis of developing a
practical implicitly restarted JBD algorithm (IRJBD) for computing several
extreme GSVD components of the matrix pair $\{A,\,L\}$. We remind that,
in finite precision arithmetic, we implement the IRJBD process for each shift in the way
that Remark~\ref{finite} states.

\subsection{A comparison with the thick restart JBD algorithm}\label{sec:compare}
In \cite{TRJBD}, a thick restart JBD algorithm is proposed. We briefly review it and compare its computational cost with IRJBD. Define $P_{k+1}=(P_k,p_{k+1})$, where $P_k$ is defined in (\ref{GSVD of Bk and Bkbar}) and $p_{k+1}$ is a vector such that $p_{k+1}^{\mathrm T}P_k=0$. Then $P_{k+1}$ is an orthogonal matrix. Combining GSVD (\ref{GSVD of Bk and Bkbar}) of $\{B_k,\bar B_k\}$ and relation (\ref{Lanczos bidiagonalization}), we obtain
$$
\resizebox{\textwidth}{!}{$
\begin{array}{ll}
Q_A V_k W_k=U_{k+1} P_{k+1} \left(\begin{array}{l}
\Theta_k \\
0
\end{array}\right), & Q_A^{\mathrm T} U_{k+1} P_{k+1}=V_k W_k(\Theta_k,0)+\alpha_{k+1} v_{k+1} e_{k+1}^{\mathrm T} P_{k+1}, \\
Q_L V_k W_k=\widehat{U}_k \bar{P}_k \Psi_{k}, & Q_L^{\mathrm T} \widehat{U}_k \bar{P}_k=V_k W_k \Psi_k+\bar\beta_k v_{k+1} e_k^{\mathrm T} \bar{P}_k.
\end{array}
$}
$$
Let $W_l$ and $\bar P_l$ be the matrices composed of the first $l$ columns of $W_k$ and $\bar P_k$, respectively, and $P_{l,k+1}=(p_1,\dots,p_l,p_{k+1})$, whose first $l$ columns are the first ones
of $P_k$. Let $\Theta_l$ and $\Psi_l$ be the $l\times l$ leading principal submatrices of $\Theta_k$ and $\Psi_k$, which generate the $l$ largest or smallest 
Ritz values and use them to
approximate $l$ largest or smallest generalized singular values of $\{A,L\}$. Denote
\begin{equation}
\label{updating orthonormal matrices in TRJBD}
U_{l,\rm{thick}}=U_{k+1} P_{l,k+1},\quad \widehat{U}_{l,\rm{thick}}=\widehat{U}_k \bar{P}_l,\quad V_{l,\rm{thick}}=V_k W_l.
\end{equation}
Then
$$
\begin{array}{ll}
Q_A V_{l,\rm{thick}}=U_{l,\rm{thick}}\left(\begin{array}{l}
\Theta_l \\
0
\end{array}\right), & Q_A^{\mathrm T} U_{l,\rm{thick}}=V_{l,\rm{thick}}(\Theta_l, 0)+v_{k+1}b_{l+1,\rm{thick}}^{\mathrm T}, \\
Q_L V_{l,\rm{thick}}=\widehat{U}_{l,\rm{thick}}\Psi_{l}, & Q_L^{\mathrm T} \widehat{U}_{l,\rm{thick}}=V_{l,\rm{thick}} \Psi_l+v_{k+1}\hat b_{l,\rm{thick}}^{\mathrm T},
\end{array}
$$
where $b_{l+1,\rm{thick}}=\alpha_{k+1}P_{l+1}^{\mathrm T}e_{k+1}$ and $\hat b_{l,\rm{thick}}=\bar\beta_k\bar P_l^{\mathrm T}e_{k}$. The one right subspace and two left subspaces for solving the GSVD problem of $\{Q_A,Q_L\}$ are the $l$-dimensional 
$\operatorname{span} (V_{l,\rm{thick}})$ and  $\operatorname{span}(U_{l,\rm{thick}})$, $\operatorname{span}(\widehat{U}_{l,\rm{thick}})$, respectively. These transformations reduce the subspace dimension from $k$ to $l$ while retaining all the information of the approximating 
$l$ right and left Ritz vectors
in the restarted subspaces. This is the $l$-step thick restart JBD process. Then one expands the subspaces through the JBD process, i.e., Algorithm \ref{alg:JBD}, from the $(l+1)$th step onwards to step $k$, and computes new approximations to the $l$ desired GSVD components with respect to the updated subspaces. Proceed in such a way until convergence. This is the thick restart JBD (TRJBD) algorithm proposed in \cite{TRJBD}. 

Regarding the computational cost of each restart, since $l$ and $k$ are very small compared to $m$, $p$ 
and $n$, the cost of updating $B_k$ and $\widehat B_k$ is negligible in both IRJBD and TRJBD. 
Except that the same number of large least squares problems are solved accurately, the other 
main computational cost comes from updating the three orthonormal basis matrices $U_{k+1}$, $\widehat 
U_k$, and $V_k$. For IRJBD, each of $G^{(i)}$, $\bar G^{(i)}$ and $P^{(i)}$ in (\ref{G,Gbar,P}) is a 
product of Givens transformations, and can be proved to be an upper Hessenberg matrix. Thus, $G$, $\bar G$ 
and $P$ have the property that every entry at position $(i,\,j)$ satisfying $i-j>k-l$ is zero. Then, the 
cost of computing $U_{l+1}^+$, $\widehat U_l^+$ and $QV_l^+$ is $2(m+p)(2kl-l^2)$ flops, ignoring 
low-order terms. However, for TRJBD, $P_{l,k+1}$, $\bar P_l$ and $W_l$ in (\ref{updating orthonormal 
matrices in TRJBD}) are full matrices, and the cost of computing $U_{l,\rm{thick}}$, $\widehat U_{l,\rm{thick}}$ and $QV_{l,\rm{thick}}$m is $4(m+p)kl$ flops, ignoring low-order terms. Thus, the 
overall computational cost of one restart of IRJBD is slightly lower than that of TRJBD.

\section{Selection of shifts} \label{selectshift}

Once nonnegative shifts $\{\lambda_i\}_{i=1}^{k-l}$ and $\{\mu_i\}_{i=1}^{k-l}$ satisfying
$\lambda_i^2+\mu_i^2=1$ are given, we can implicitly restart the JBD process as described previously.
However, the selection of shifts is crucial to make IRJBD converge as fast as possible for computing extreme GSVD components of $\{A,\,L\}$. In this section, we propose a selection strategy of shifts that are best possible
within the JBD method in some sense.
Because of $\lambda_i^2+\mu_i^2=1$, we only need to focus on selection of 
$\{\lambda_i\}_{i=1}^{k-l}$.

\subsection{Exact shifts}
For an implicitly restarted Krylov subspace algorithm for the eigenvalue problem, it has been proved in
\cite{Jia-poly-refined,Jia-refined-harmonic-Arnoldi} that the closer the shifts are to those undesired eigenvalues, the more
information the restarted Krylov subspace contains on the desired eigenvectors, so that the algorithm
should converge faster.
Jia and Niu \cite{Jia-IRRBL} extends these results to implicitly restarted Lanczos
bidiagonalization type algorithms for the large SVD problem; see Theorem 5.1 there.
These results are directly adaptable to IRJBD because it is mathematically equivalent to the implicitly restarted Lanczos bidiagonalization method for the SVD problems of $Q_A$ and $Q_L$.
It is seen from (\ref{initial vector}) that the better the shifts $\lambda_i$ 
approximate some of the undesired singular values of $Q_A$, the smaller the components of $u_1^+$ and $v_1^+$ are in the directions of the undesired left and right singular vectors of $Q_A$, respectively. 
Consequently, the column spaces of $U_{l+1}^+$ and $V_l^+$ contain more accurate approximations to the desired left and right singular vectors, so that the implicitly restarted 
Lanczos bidiagonalization methods for $Q_A$ and $Q_L$ and thus IRJBD generally converges faster. These results form the mechanism of selecting the best possible shifts within the framework of the JBD method.

In a $k$-step JBD method, the singular values $\{c_i^{(k)}\}_{i=1}^k$ of $B_k$ are the best available approximations to some of the singular values of $Q_A$. If we want to compute the $l$ largest generalized singular values of $\{A,\,L\}$ and the corresponding generalized left and right singular vectors, we select $\{c_i^{(k)}\}_{i=1}^l$ to approximate the $l$ largest singular values of $Q_A$, which correspond to the $l$ largest generalized singular values of $\{A,\,L\}$. Naturally, we use those undesired ones $\{c_i^{(k)}\}_{i=l+1}^k$ as shifts, called exact shifts. If we are interested in the $l$ smallest generalized singular values of $\{A,\,L\}$ and the corresponding generalized left and right singular vectors, we use $\{c_i^{(k)}\}_{i=k-l+1}^k$ to approximate the $l$ smallest singular values of $Q_A$, which correspond to the $l$ smallest generalized singular values of $\{A,\,L\}$, and select $\{c_i^{(k)}\}_{i=1}^{k-l}$ as exact shifts. The corresponding Ritz values $\{c_i^{(k)}/s_i^{(k)}\}_{i=1}^{k-l}$ 
are the best approximations obtained by the JBD method to some unwanted generalized singular values 
of $\{A,L\}$. 

\subsection{Adaptive shifting strategy}
If a shift $\lambda_i$ is close to some {\em desired} singular value $c_j$ of $Q_A$, (\ref{initial vector}) indicates that $u_1^+$ and $v_1^+$ will nearly annihilate the components of the left and right singular vectors of $Q_A$ corresponding to $c_j$, causing that the convergence becomes slow or even stagnates. This often occurs when the $l$th singular value is close to the $(l+1)$th one of $Q_A$. In \cite{Jia-IRRBL,Jia-IRRHLB}, Jia and Niu develop an adaptive shifting strategy to overcome this deficiency. We adapt their strategy to IRJBD.

For the computation of the $l$ largest generalized singular values, we define an approximation of the relative gap between the $l$th largest singular value $c_l$ of $Q_A$ and a shift $\lambda_i$ as
\begin{equation}
\label{gap of cl(k) and lambdai}
\operatorname{relgap}_{l,i}=\left|\frac{c_l^{(k)}-\lambda_i}{c_l^{(k)}}\right|.
\end{equation}
If $\operatorname{relgap}_{l,i}<10^{-3}$, we claim $\lambda_i$ to be a bad shift and set it to zero. Zero shifts avoid annihilating the components of the singular vectors corresponding to $c_l$ in $u_1^+$ and $v_1^+$ and thus overcome the drawback of the exact shifts.

If we are required to calculate the $l$ smallest generalized singular values, the strategy changes $l$ in (\ref{gap of cl(k) and lambdai}) to $k-l+1$. We set those $\lambda_i$ satisfying  $\operatorname{relgap}_{k-l+1,i}<10^{-3}$ to 1, the maximal possible shift, and fix 
the drawback of the exact shifts.

\section{The IRJBD algorithm and some details}
For large matrix eigenproblems and SVD problems, in order to speed up convergence, similar to 
the implicitly restarted Arnoldi algorithm, which is the function {\sf eigs} in MATLAB, Jia \cite{Jia-poly-refined} and Jia \& Niu \cite{Jia-IRRHLB} compute $l+3$ approximate eigenpairs or singular triplets, so the number of shifts is $k-(l+3)$ when $l$ eigenpairs or singular triplets are desired. We adapt this strategy to the IRJBD algorithm for GSVD problems. The default parameter $adjust=3$ means that $k-(l+3)$ shifts are used for implicit restart and the basis matrices of the restarted subspaces are $V_{l+3}^{+}$, $U_{l+4}^{+}$ and $\widehat U_{l+3}^{+}$. Algorithm \ref{alg:IRJBD} describes the IRJBD algorithm, and Table \ref{tab:parameters} lists the parameters and their default values.

\begin{algorithm}[ht]
	\renewcommand{\algorithmicrequire}{\textbf{Input:}}
	\renewcommand{\algorithmicensure}{\textbf{Output:}}
	\caption{The IRJBD algorithm}
	\label{alg:IRJBD}
	\begin{algorithmic}[1]
        \REQUIRE The matrices $A \in \mathbb{R}^{m \times n}$, $L \in \mathbb{R}^{p \times n}$, vector $u_1 \in \mathbb{R}^m$ with $\|u_1\|=1$, the number $l=|target|$ of the desired extreme GSVD components with $target>0$ or $target<0$ indicating that the largest or
        smallest GSVD components are required, $k_{\max}$, $adjust$, and
        the stopping tolerance $tol$.
         \ENSURE The $l$ converged GSVD components.
        \STATE Do the $k_{\max}$-step JBD process of $\{A,\,L\}$ using Algorithm \ref{alg:JBD}.
        \WHILE{the $l$ approximate GSVD components do not converge or restarts $\leq maxit$}
        \label{determining convergence in alg implementation}
            \IF{the basis size $<k_{\max}$}
                \STATE Do one step of the JBD process (Lines \ref{JBD inner start}-\ref{JBD inner end} of Algorithm \ref{alg:JBD}).
            \ELSE
                \STATE Compute the GSVD of $\{B_{k_{\max}},\,\bar{B}_{k_{\max}}\}$ and select the shifts $\lambda_i$, $i=1,2,\dots,k_{\max}-(l+adjust)$ according to the sign of $target$.
                \STATE Implicitly restart the JBD process as in section \ref{sec:implicit restart}.
            \ENDIF
        \ENDWHILE
        \STATE Compute the $l$ converged GSVD components according to section \ref{sec:JBD method}.
	\end{algorithmic}
\end{algorithm}

Some implementation details of the algorithm are as follows. The least squares problem (\ref{least-squares}) needs to be solved accurately to ensure that (\ref{BTB+barBTbarB=I}) holds at the level of $\epsilon$. We solve it by the MATLAB function {\sf lsqr} with the stopping tolerance $10\epsilon$. As for the maximum number of iterations, theoretically, the LSQR algorithm obtains the accurate solution after at most $n$ steps. However, in finite precision arithmetic, when the condition number of $(A^{\mathrm T},\,L^{\mathrm T})^{\mathrm T}$ is large, LSQR may not converge within $n$ steps due to the loss of orthogonality of basis vectors. Thus we set the maximum iteration steps to $10n$. We have observed that for the test problems in our experiments, $10n$ is enough to ensure convergence. We perform full reorthogonalization on $V_k^{\prime},\,U_k$ and $\widehat U_k$ to ensure that they are numerically orthonormal to the working precision $\epsilon$. By default, the convergence of line \ref{determining convergence in alg implementation} in Algorithm \ref{alg:IRJBD} is judged by (\ref{stopping criterion}) with the stopping tolerance $tol=10^{-8}$. The starting vector $u_1$ is randomly generated by the standard normal distribution function {\sf randn} in MATLAB and then normalized.

\begin{table}[ht]
\centering
\begin{tabularx}{\textwidth}{>{\raggedright\arraybackslash}p{0.15\textwidth}%
                                    >{\raggedright\arraybackslash}p{0.25\textwidth}%
                                    >{\raggedright\arraybackslash}p{0.5\textwidth}}
\hline
Parameters & Default values & Description \\ \hline
$target$ & 5 & $|target|=l$ is the number of desired GSVD components, where $target>0$ or $target<0$ means
that the largest or smallest GSVD components are required. \\ \hline
$k_{\max}$ &  & Maximum subspace dimension \\ \hline
$adjust$ & $3$ & Integer added to $l$ to speed up convergence \\ \hline
$tol$ & $10^{-8}$ & Stopping tolerance \\ \hline
$maxit$ & $1000$ & Maximum number of restarts \\ \hline
$lsqrtol$ & $10\epsilon$ & Stopping tolerance of {\sf lsqr} \\ \hline
$lsqrmaxit$ & $10n$ & Maximum number of iterations of {\sf lsqr} \\ \hline
$u_1$ & generated by {\sf randn} and then normalized & The unit length starting vector \\ \hline
\end{tabularx}
\caption{Parameters of IRJBD}
\label{tab:parameters}
\end{table}

\section{Numerical experiments}
We now present a number of numerical experiments to demonstrate the performance of Algorithm \ref{alg:IRJBD}, written in MATLAB language, and compare it with the public 
C language code of TRJBD in \cite{TRJBD}. The experiments will illustrate that
\Cref{alg:IRJBD} is at least competitive with and can outperform TRJBD considerably in terms of restarts.
Since section~\ref{sec:compare} has shown that
each restart of IRJBD and TRJBD  basically costs the same for the same $l$ and $k_{\max}$, fewer restarts mean higher efficiency. The experiments were performed on an AMD Ryzen 7 5800X CPU with 78 GB RAM and 8 cores using the MATLAB R2024a with the machine precision $\epsilon=2.22\times10^{-16}$ under the Ubuntu 20.04.3 LTS 64-bit system.

We use some matrices from the SuiteSparse Matrix Collection \cite{davis2011university} as $A$, and $L$ is taken as (cf. \cite{Huang-Numerical-experiments}):
$$
L=\left(\begin{array}{cccc}
3 & 1 & & \\
1 & \ddots & \ddots & \\
& \ddots & \ddots & 1 \\
& & 1 & 3
\end{array}\right),
$$
which is well conditioned. Table \ref{tab:matrices} lists the test matrices $A$ and some of their properties, where $nnz$ is the total number of nonzero entries of $\{A,L\}$ and $\kappa(A)=\frac{\sigma_{\max}(A)}{\sigma_{\text{min}}(A)}$, available from the \href{https://sparse.tamu.edu/}{SuiteSparse Matrix Collection}, is the condition number of $A$ and
indicates the level of ill-conditioning of $A$ and reflects magnitudes of generalized singular 
values of $\{A,L\}$. In order to ensure that all the $A$ are flat or square, i.e., $m\leqslant n$, we transpose some matrices, denoted by superscript $\mathrm T$. Note that the smallest singular value of kneser\_8\_3\_1$^\mathrm T$ is tiny, which causes that the matrix is numerically
row rank deficient and the smallest generalized singular for this case is tiny.

\begin{table}[H]
\centering
\begin{tabular}{|l|r|r|r|r|r|}
\hline
$A$             & $m$   & $n$   & $p$   & $nnz$  & $\kappa(A)$ \\ \hline
flower\_5\_4    & 5226  & 14721 & 14721 & 88103  & 14.9100     \\ \hline
kneser\_8\_3\_1$^{\mathrm T}$ & 15681 & 15737 & 15737 & 94251  & 6.40E+118   \\ \hline
l30             & 2701  & 16281 & 16281 & 100911 & 2.30E+03    \\ \hline
lp22            & 2958  & 16392 & 16392 & 117692 & 25.7823     \\ \hline
lp\_maros\_r7   & 3136  & 9408  & 9408  & 173070 & 2.3231      \\ \hline
model8          & 2896  & 6464  & 6464  & 44667  & 53.6287     \\ \hline
rosen10         & 2056  & 6152  & 6152  & 82646  & 194.0816    \\ \hline
\end{tabular}
\caption{Test matrices}
\label{tab:matrices}
\end{table}

Table \ref{tab:results} reports the results obtained by the IRJBD algorithm, where
$iter$ is the number of restarts, $Res_b$ is the largest relative residual bound (\ref{stopping criterion}) of the $l$ approximate GSVD components that approximate the desired extreme ones when the algorithm terminates, $Res$ is the actual largest relative residual norm and the quantity $\|\underline{B}_k^{-1}\|\|\widehat B_k^{-1}\|$ is its value when the algorithm stops. The norms $\|A\|,\,\|L\|$ and $\|R\|$ are estimated by $\sqrt{\|A\|_1\|A\|_{\infty}},\,\sqrt{\|L\|_1\|L\|_{\infty}}$ and the right-hand side of (\ref{bound of R}), respectively.

\begin{table}[]
\centering
\resizebox{\textwidth}{!}{
\begin{tabular}{|l|c|l|l|l|l|l|l|}
\hline
$A$ & $m\times n$ & $target$ & $k_{\max}$ & $iter$ & $Res_b$ & $Res$ & $\|\underline{B}_k^{-1}\|\|\widehat B_k^{-1}\|$ \\ \hline
\multirow{8}{*}{flower\_5\_4} & \multirow{8}{*}{\begin{tabular}[c]{@{}c@{}}5226\\ $\times$\\ 14721\end{tabular}} & \multirow{2}{*}{5} & 25 & 17 & 9.42E-09 & 4.55E-09 & 2.46E+01 \\ \cline{4-8}
 &  &  & 50 & 6 & 9.94E-09 & 5.40E-09 & 2.47E+01 \\ \cline{3-8}
 &  & \multirow{2}{*}{-5} & 25 & 14 & 9.37E-09 & 4.49E-09 & 2.34E+01 \\ \cline{4-8}
 &  &  & 50 & 5 & 9.64E-09 & 5.17E-09 & 2.49E+01 \\ \cline{3-8}
 &  & \multirow{2}{*}{10} & 25 & 38 & 9.16E-09 & 4.33E-09 & 2.32E+01 \\ \cline{4-8}
 &  &  & 50 & 10 & 9.61E-09 & 5.16E-09 & 2.45E+01 \\ \cline{3-8}
 &  & \multirow{2}{*}{-10} & 25 & 28 & 9.76E-09 & 5.15E-09 & 2.12E+01 \\ \cline{4-8}
 &  &  & 50 & 8 & 9.46E-09 & 4.48E-09 & 2.11E+01 \\ \hline
\multirow{8}{*}{kneser\_8\_3\_1} & \multirow{8}{*}{\begin{tabular}[c]{@{}c@{}}15681\\ $\times$\\ 15737\end{tabular}} & \multirow{2}{*}{5} & 25 & 44 & 9.96E-09 & 4.49E-06 & 2.32E+14 \\ \cline{4-8}
 &  &  & 50 & 13 & 8.10E-09 & 2.11E-05 & 1.50E+16 \\ \cline{3-8}
 &  & \multirow{2}{*}{-5} & 25 & 110 & 9.35E-09 & 2.50E-02 & 1.78E+63 \\ \cline{4-8}
 &  &  & 50 & 36 & 8.74E-09 & 1.99E-02 & 1.80E+61 \\ \cline{3-8}
 &  & \multirow{2}{*}{10} & 25 & 83 & 9.89E-09 & 5.70E-06 & 7.61E+14 \\ \cline{4-8}
 &  &  & 50 & 18 & 7.59E-09 & 5.09E-05 & 1.18E+16 \\ \cline{3-8}
 &  & \multirow{2}{*}{-10} & 25 & 441 & 9.69E-09 & 2.75E-02 & 1.66E+130 \\ \cline{4-8}
 &  &  & 50 & 59 & 8.92E-09 & 1.55E-02 & 6.05E+86 \\ \hline
\multirow{8}{*}{l30} & \multirow{8}{*}{\begin{tabular}[c]{@{}c@{}}2701\\ $\times$\\ 16281\end{tabular}} & \multirow{2}{*}{5} & 25 & 264 & 9.68E-09 & 6.21E-09 & 3.65E+03 \\ \cline{4-8}
 &  &  & 50 & 48 & 9.73E-09 & 6.23E-09 & 1.93E+03 \\ \cline{3-8}
 &  & \multirow{2}{*}{-5} & 25 & 63 & 9.53E-09 & 2.62E-09 & 4.59E+03 \\ \cline{4-8}
 &  &  & 50 & 14 & 9.28E-09 & 3.19E-09 & 4.64E+03 \\ \cline{3-8}
 &  & \multirow{2}{*}{10} & 25 & 251 & 9.65E-09 & 6.17E-09 & 2.63E+03 \\ \cline{4-8}
 &  &  & 50 & 37 & 8.47E-09 & 4.64E-09 & 4.49E+03 \\ \cline{3-8}
 &  & \multirow{2}{*}{-10} & 25 & 134 & 9.99E-09 & 3.20E-09 & 4.58E+03 \\ \cline{4-8}
 &  &  & 50 & 21 & 9.53E-09 & 2.44E-09 & 4.64E+03 \\ \hline
\multirow{8}{*}{lp22} & \multirow{8}{*}{\begin{tabular}[c]{@{}c@{}}2958\\ $\times$\\ 16392\end{tabular}} & \multirow{2}{*}{5} & 25 & 24 & 9.20E-09 & 1.03E-09 & 4.55E+01 \\ \cline{4-8}
 &  &  & 50 & 8 & 8.03E-09 & 9.55E-10 & 4.60E+01 \\ \cline{3-8}
 &  & \multirow{2}{*}{-5} & 25 & 14 & 9.67E-09 & 1.03E-09 & 3.72E+01 \\ \cline{4-8}
 &  &  & 50 & 5 & 7.92E-09 & 8.42E-10 & 4.62E+01 \\ \cline{3-8}
 &  & \multirow{2}{*}{10} & 25 & 38 & 9.43E-09 & 1.02E-09 & 4.42E+01 \\ \cline{4-8}
 &  &  & 50 & 10 & 9.74E-09 & 1.03E-09 & 4.50E+01 \\ \cline{3-8}
 &  & \multirow{2}{*}{-10} & 25 & 24 & 9.54E-09 & 1.24E-09 & 3.21E+01 \\ \cline{4-8}
 &  &  & 50 & 7 & 8.76E-09 & 1.06E-09 & 4.62E+01 \\ \hline
\multirow{8}{*}{lp\_maros\_r7} & \multirow{8}{*}{\begin{tabular}[c]{@{}c@{}}3136\\ $\times$\\ 9408\end{tabular}} & \multirow{2}{*}{5} & 25 & 20 & 9.41E-09 & 4.88E-09 & 5.06E+00 \\ \cline{4-8}
 &  &  & 50 & 7 & 9.88E-09 & 1.34E-08 & 5.07E+00 \\ \cline{3-8}
 &  & \multirow{2}{*}{-5} & 25 & 55 & 9.58E-09 & 5.00E-09 & 5.03E+00 \\ \cline{4-8}
 &  &  & 50 & 13 & 8.21E-09 & 5.74E-09 & 5.08E+00 \\ \cline{3-8}
 &  & \multirow{2}{*}{10} & 25 & 73 & 9.31E-09 & 1.28E-08 & 4.63E+00 \\ \cline{4-8}
 &  &  & 50 & 17 & 9.49E-09 & 1.29E-08 & 5.00E+00 \\ \cline{3-8}
 &  & \multirow{2}{*}{-10} & 25 & 84 & 9.95E-09 & 5.21E-09 & 5.04E+00 \\ \cline{4-8}
 &  &  & 50 & 18 & 9.54E-09 & 6.49E-09 & 5.06E+00 \\ \hline
\multirow{8}{*}{model8} & \multirow{8}{*}{\begin{tabular}[c]{@{}c@{}}2896\\ $\times$\\ 6464\end{tabular}} & \multirow{2}{*}{5} & 25 & 14 & 7.67E-09 & 2.64E-09 & 2.94E+01 \\ \cline{4-8}
 &  &  & 50 & 5 & 5.66E-09 & 2.89E-09 & 1.16E+02 \\ \cline{3-8}
 &  & \multirow{2}{*}{-5} & 25 & 46 & 9.81E-09 & 3.14E-09 & 1.23E+02 \\ \cline{4-8}
 &  &  & 50 & 13 & 8.87E-09 & 2.86E-09 & 1.24E+02 \\ \cline{3-8}
 &  & \multirow{2}{*}{10} & 25 & 27 & 9.29E-09 & 3.18E-09 & 9.51E+01 \\ \cline{4-8}
 &  &  & 50 & 8 & 7.61E-09 & 3.93E-09 & 7.90E+01 \\ \cline{3-8}
 &  & \multirow{2}{*}{-10} & 25 & 86 & 9.86E-09 & 3.37E-09 & 1.22E+02 \\ \cline{4-8}
 &  &  & 50 & 18 & 9.84E-09 & 3.33E-09 & 1.19E+02 \\ \hline
\multirow{8}{*}{rosen10} & \multirow{8}{*}{\begin{tabular}[c]{@{}c@{}}2056\\ $\times$\\ 6152\end{tabular}} & \multirow{2}{*}{5} & 25 & 55 & 9.39E-09 & 9.99E-11 & 3.04E+02 \\ \cline{4-8}
 &  &  & 50 & 19 & 9.58E-09 & 9.67E-11 & 3.17E+02 \\ \cline{3-8}
 &  & \multirow{2}{*}{-5} & 25 & 5 & 9.92E-09 & 2.13E-09 & 3.65E+01 \\ \cline{4-8}
 &  &  & 50 & 2 & 4.65E-09 & 7.73E-10 & 5.66E+01 \\ \cline{3-8}
 &  & \multirow{2}{*}{10} & 25 & 352 & 9.64E-09 & 6.01E-10 & 3.27E+02 \\ \cline{4-8}
 &  &  & 50 & 49 & 8.88E-09 & 1.13E-10 & 3.27E+02 \\ \cline{3-8}
 &  & \multirow{2}{*}{-10} & 25 & 13 & 9.06E-09 & 4.65E-10 & 1.37E+01 \\ \cline{4-8}
 &  &  & 50 & 4 & 6.21E-09 & 1.04E-09 & 3.83E+01 \\ \hline
\end{tabular}
}
\caption{Experimental results of IRJBD}
\label{tab:results}
\end{table}

For all the problems but numerically rank deficient kneser\_8\_3\_1$^{\mathrm T}$,
the IRJBD algorithm successfully computed
the $l$ desired GSVD components. It is seen from Table \ref{tab:results} that $Res$ and $Res\_b$ are comparable in size and $Res<Res\_b$ often, showing that bound (\ref{bound of residual}) is tight and (\ref{stopping criterion}) is a reliable stopping criterion. These cases precisely correspond to modestly sized $\|\underline{B}_k^{-1}\|$ and $\|\widehat B_k^{-1}\|$. On the other hand, if the product $\|\underline{B}_k^{-1}\|\|\widehat B_k^{-1}\|$ is large, as indicated by kneser\_8\_3\_1$^{\mathrm T}$, the stopping criterion (\ref{stopping criterion}) is no longer reliable. For $A=\text{kneser\_8\_3\_1}^{\mathrm T}$, $\{A,L\}$ has at least one trivial zero GSVD component. As
a consequence, when the smallest GSVD components are of interest, the smallest eigenvalue of $\underline{B}_k^{\mathrm T}\underline{B}_k$ becomes very small as $k$ increases because the smallest eigenvalue of $Q_AQ_A^{\mathrm T}$ is tiny, which causes $\|\underline{B}_k^{-1}\|$ uncontrollably 
large, so is $\|\underline{B}_k^{-1}\|\|\widehat B_k^{-1}\|$. Thus, the actual residual norms may not be small, which confirms Theorem \ref{Th:bound of residual in finite}, and 
the results are in accordance with our analysis on the reliability issue of $Res_b$ after \Cref{Def:relative residual}. 

The above experiments and analysis naturally pose a crucial question: How to make the JBD method unaffected by zero and infinite GSVD components and only compute desired nontrivial ones.

Taking the computation of ten largest and smallest GSVD components of two matrix pairs as an example, we depict Figure \ref{fig:flower_5_4andlp22} and show how the upper bounds of the relative residual norms change as restarts proceed. From the figure, we observe irregular convergence phenomena, though IRJBD overall converged quite fast.

\begin{figure}
\centering
\includegraphics[width=\linewidth]{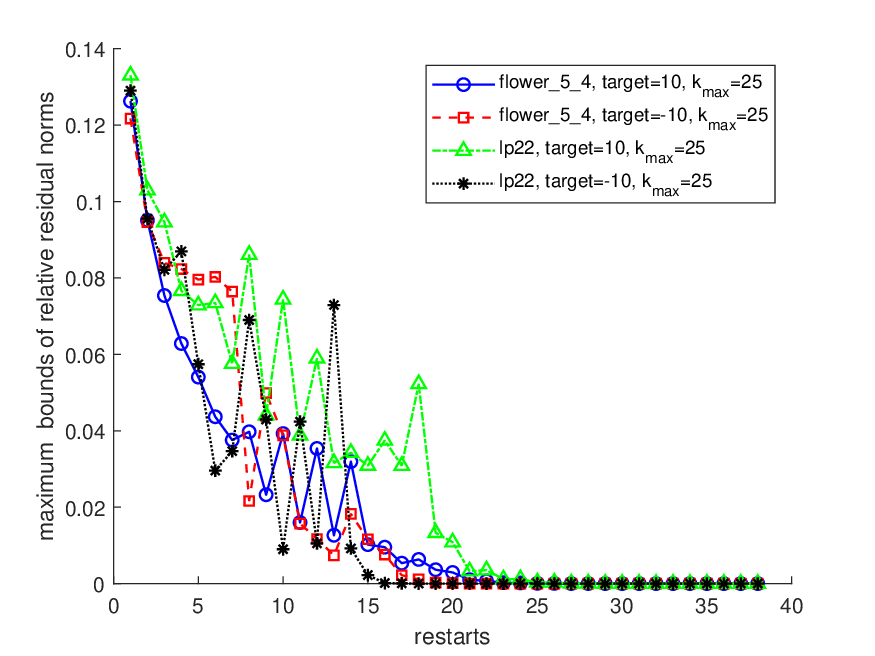}
\caption{Results of computing the ten largest and smallest GSVD components of two matrix pairs}
\label{fig:flower_5_4andlp22}
\end{figure}

\begin{table}[]
\centering
\resizebox{\textwidth}{!}{
\begin{tabular}{|l|c|l|l|l|l|l|l|l|}
\hline
$A$ & $m\times n$ & $target$ & $k_{\max}$ & $iter_I$ & $iter_T$ & $Res_I$ & $Res_T$ & $speedup$(\%) \\ \hline
\multirow{8}{*}{flower\_5\_4} & \multirow{8}{*}{\begin{tabular}[c]{@{}c@{}}5226\\ $\times$\\ 14721\end{tabular}} & \multirow{2}{*}{5} & 25 & 17 & 21 & 4.55E-09 & 1.83E-09 & 19.0476 \\ \cline{4-9}
 &  &  & 50 & 6 & 9 & 5.40E-09 & 1.19E-09 & 33.3333 \\ \cline{3-9}
 &  & \multirow{2}{*}{-5} & 25 & 14 & 15 & 4.49E-09 & 1.22E-09 & 6.6667 \\ \cline{4-9}
 &  &  & 50 & 5 & 7 & 5.17E-09 & 3.70E-10 & 28.5714 \\ \cline{3-9}
 &  & \multirow{2}{*}{10} & 25 & 38 & 37 & 4.33E-09 & 2.25E-09 & -2.7027 \\ \cline{4-9}
 &  &  & 50 & 10 & 15 & 5.16E-09 & 1.73E-09 & 33.3333 \\ \cline{3-9}
 &  & \multirow{2}{*}{-10} & 25 & 28 & 23 & 5.15E-09 & 1.55E-09 & -21.7391 \\ \cline{4-9}
 &  &  & 50 & 8 & 10 & 4.48E-09 & 7.88E-10 & 20.0000 \\ \hline
\multirow{8}{*}{kneser\_8\_3\_1} & \multirow{8}{*}{\begin{tabular}[c]{@{}c@{}}15681\\ $\times$\\ 15737\end{tabular}} & \multirow{2}{*}{5} & 25 & 10 & 13 & 4.48E-09 & 2.18E-09 & 23.0769 \\ \cline{4-9}
 &  &  & 50 & 4 & 6 & 4.35E-09 & 1.26E-09 & 33.3333 \\ \cline{3-9}
 &  & \multirow{2}{*}{-5} & 25 & 113 & 122 & 0.0100 & 0.0313 & 7.3770 \\ \cline{4-9}
 &  &  & 50 & 33 & 127 & 0.0062 & 0.0584 & 74.0157 \\ \cline{3-9}
 &  & \multirow{2}{*}{10} & 25 & 67 & 61 & 3.74E-07 & 3.24E-09 & -9.8361 \\ \cline{4-9}
 &  &  & 50 & 14 & 19 & 1.37E-06 & 2.56E-09 & 26.3158 \\ \cline{3-9}
 &  & \multirow{2}{*}{-10} & 25 & 284 & 339 & 0.0228 & 0.0476 & 16.2242 \\ \cline{4-9}
 &  &  & 50 & 61 & 314 & 0.0075 & 0.1944 & 80.5732 \\ \hline
\multirow{8}{*}{l30} & \multirow{8}{*}{\begin{tabular}[c]{@{}c@{}}2701\\ $\times$\\ 16281\end{tabular}} & \multirow{2}{*}{5} & 25 & 264 & 258 & 6.21E-09 & 1.95E-09 & -2.3256 \\ \cline{4-9}
 &  &  & 50 & 48 & 50 & 6.23E-09 & 1.67E-09 & 4.0000 \\ \cline{3-9}
 &  & \multirow{2}{*}{-5} & 25 & 63 & 80 & 2.62E-09 & 2.00E-09 & 21.2500 \\ \cline{4-9}
 &  &  & 50 & 14 & 25 & 3.19E-09 & 1.48E-09 & 44.0000 \\ \cline{3-9}
 &  & \multirow{2}{*}{10} & 25 & 251 & 259 & 6.17E-09 & 1.95E-09 & 3.0888 \\ \cline{4-9}
 &  &  & 50 & 37 & 50 & 4.64E-09 & 1.38E-09 & 26.0000 \\ \cline{3-9}
 &  & \multirow{2}{*}{-10} & 25 & 134 & 129 & 3.20E-09 & 2.00E-09 & -3.8760 \\ \cline{4-9}
 &  &  & 50 & 21 & 34 & 2.44E-09 & 1.48E-09 & 38.2353 \\ \hline
\multirow{8}{*}{lp22} & \multirow{8}{*}{\begin{tabular}[c]{@{}c@{}}2958\\ $\times$\\ 16392\end{tabular}} & \multirow{2}{*}{5} & 25 & 24 & 27 & 1.03E-09 & 5.36E-10 & 11.1111 \\ \cline{4-9}
 &  &  & 50 & 8 & 12 & 9.55E-10 & 2.32E-10 & 33.3333 \\ \cline{3-9}
 &  & \multirow{2}{*}{-5} & 25 & 14 & 16 & 1.03E-09 & 6.70E-10 & 12.5000 \\ \cline{4-9}
 &  &  & 50 & 5 & 8 & 8.42E-10 & 4.76E-10 & 37.5000 \\ \cline{3-9}
 &  & \multirow{2}{*}{10} & 25 & 38 & 35 & 1.02E-09 & 6.83E-10 & -8.5714 \\ \cline{4-9}
 &  &  & 50 & 10 & 15 & 1.03E-09 & 5.39E-10 & 33.3333 \\ \cline{3-9}
 &  & \multirow{2}{*}{-10} & 25 & 24 & 23 & 1.24E-09 & 9.23E-10 & -4.3478 \\ \cline{4-9}
 &  &  & 50 & 7 & 10 & 1.06E-09 & 4.76E-10 & 30.0000 \\ \hline
\multirow{8}{*}{lp\_maros\_r7} & \multirow{8}{*}
{\begin{tabular}[c]{@{}c@{}}3136\\ $\times$\\ 9408\end{tabular}} & \multirow{2}{*}{5} & 25 & 20 & 26 & 4.88E-09 & 6.62E-09 & 23.0769 \\ \cline{4-9}
 &  &  & 50 & 7 & 8 & 1.34E-08 & 0.0658 & 12.5000 \\ \cline{3-9}
 &  & \multirow{2}{*}{-5} & 25 & 55 & 45 & 5.00E-09 & 6.12E-09 & -22.2222 \\ \cline{4-9}
 &  &  & 50 & 13 & 19 & 5.74E-09 & 4.93E-09 & 31.5789 \\ \cline{3-9}
 &  & \multirow{2}{*}{10} & 25 & 73 & 78 & 1.28E-08 & 6.62E-09 & 6.4103 \\ \cline{4-9}
 &  &  & 50 & 17 & 25 & 1.29E-08 & 0.0658 & 32.0000 \\ \cline{3-9}
 &  & \multirow{2}{*}{-10} & 25 & 84 & 57 & 5.21E-09 & 6.12E-09 & -47.3684 \\ \cline{4-9}
 &  &  & 50 & 18 & 23 & 6.49E-09 & 5.67E-09 & 21.7391 \\ \hline
\multirow{8}{*}{model8} & \multirow{8}{*}{\begin{tabular}[c]{@{}c@{}}2896\\ $\times$\\ 6464\end{tabular}} & \multirow{2}{*}{5} & 25 & 14 & 18 & 2.64E-09 & 1.15E-09 & 22.2222 \\ \cline{4-9}
 &  &  & 50 & 5 & 8 & 2.89E-09 & 1.70E-10 & 37.5000 \\ \cline{3-9}
 &  & \multirow{2}{*}{-5} & 25 & 46 & 42 & 3.14E-09 & 1.41E-09 & -9.5238 \\ \cline{4-9}
 &  &  & 50 & 13 & 18 & 2.86E-09 & 1.12E-09 & 27.7778 \\ \cline{3-9}
 &  & \multirow{2}{*}{10} & 25 & 27 & 26 & 3.18E-09 & 1.31E-09 & -3.8462 \\ \cline{4-9}
 &  &  & 50 & 8 & 11 & 3.93E-09 & 4.53E-10 & 27.2727 \\ \cline{3-9}
 &  & \multirow{2}{*}{-10} & 25 & 86 & 61 & 3.37E-09 & 1.41E-09 & -40.9836 \\ \cline{4-9}
 &  &  & 50 & 18 & 21 & 3.33E-09 & 8.84E-10 & 14.2857 \\ \hline
\multirow{8}{*}{rosen10} & \multirow{8}{*}{\begin{tabular}[c]{@{}c@{}}2056\\ $\times$\\ 6152\end{tabular}} & \multirow{2}{*}{5} & 25 & 55 & 97 & 9.99E-11 & 7.25E-12 & 43.2990 \\ \cline{4-9}
 &  &  & 50 & 19 & 38 & 9.67E-11 & 6.42E-06 & 50.0000 \\ \cline{3-9}
 &  & \multirow{2}{*}{-5} & 25 & 5 & 11 & 2.13E-09 & 2.04E-11 & 54.5455 \\ \cline{4-9}
 &  &  & 50 & 2 & 3 & 7.73E-10 & 8.23E-13 & 33.3333 \\ \cline{3-9}
 &  & \multirow{2}{*}{10} & 25 & 352 & 328 & 6.01E-10 & 7.25E-12 & -7.3171 \\ \cline{4-9}
 &  &  & 50 & 49 & 60 & 1.13E-10 & 1.54E-05 & 18.3333 \\ \cline{3-9}
 &  & \multirow{2}{*}{-10} & 25 & 13 & 13 & 4.65E-10 & 3.73E-11 & 0.0000 \\ \cline{4-9}
 &  &  & 50 & 4 & 5 & 1.04E-09 & 1.30E-11 & 20.0000 \\ \hline
\end{tabular}
}
\caption{An efficiency comparison of IRJBD and TRJBD}
\label{tab:results of TRJBD}
\end{table}

We also test the code of TRJBD in SLEPc \cite{TRJBD}, the Scalable Library for Eigenvalue Problem
\cite{SLEPc} on the above problems. To be fair, we took the same
stopping tolerance, maximum restarts and
starting vectors for TRJBD as those for IRJBD. Since TRJBD
was written in the more efficient C language and
was optimized in some sense while our IRJBD is in MATLAB
language, comparing runtime is not a right way and is generally misleading. However, since the
computational cost of one restart of IRJBD is slightly lower than that of TRJBD,
for the same starting vectors and subspace
dimension, the overall efficiency can be measured in terms of the total iter's, which are independent of
programming languages. Table \ref{tab:results of
TRJBD} lists the results obtained by IRJBD and TRJBD, where the subscripts I and T stands for IRJBD and
TRJBD, respectively, $iter_I$ and $iter_T$ are the numbers of restarts used by IRJBD and TRJBD when they
terminated. The residual used for TRJBD is defined by
$$
\|r_i^{GSVD}\|=\sqrt{\|s_i^{(k)}A^{\mathrm T}y_i^{(k)}-c_i^{(k)}L^{\mathrm T}Lx_i^{(k)}\|^2+\|c_i^{(k)}L^{\mathrm T}z_i^{(k)}-s_i^{(k)}A^{\mathrm T}Ax_i^{(k)}\|^2}
$$
and it is proved in \cite{TRJBD} that $\|r_i^{GSVD}\|=\|r_{i,3}^{(k)}\|$, where $r_{i,3}^{(k)}$ is defined as in (\ref{residual}). Since $\|r_i^{(k)}\|=\|r_{i,3}^{(k)}\|$, we have $\|r_i^{GSVD}\|=\|r_i^{(k)}\|$.
Therefore,
$$
speedup=(iter_T-iter_I)/iter_T
$$
indicates how much faster IRJBD is than TRJBD.

As is seen from Table \ref{tab:results of TRJBD}, in most cases, our IRJBD algorithm used fewer and often
much fewer restarts to converge than the TRJBD algorithm did, as the {\em speedup}s indicate; e.g, 54.5\% fewer for rosen10. For the
computation of the smallest GSVD components
of the numerical rank-deficient matrix $A=\text{kneser\_8\_3\_1}$, neither IRJBD nor TRJBD worked, as is
expected. We find that for the well-conditioned $A=\text{rosen10}$, $target=5,\,10$ and $k_{\max}=50$, TRJBD stopped within restarts $maxit$ but the actual residual norms are much larger than $tol$.
There should be some bugs in the code of TRJBD. Overall, IRJBD often performs considerably better than TRJBD.

\section{Conclusion}

We have extended the implicit restarting technique to the JBD process and developed an IRJBD algorithm to 
compute several largest or smallest GSVD components of a large regular matrix pair. We have proved that 
the JBD method cannot compute zero and infinite GSVD components, which is unlike the JD GSVDsolver 
\cite{Huang-Numerical-experiments}. We have introduced a general residual, 
established compact bounds for it in both exact and finite precision arithmetic, and 
revealed the reliability requirement of using it in finite precision. The bound is 
important in computation since it is efficient to use and avoids the expensive computation 
of each approximate right generalized singular vector before convergence.
Numerical experiments have confirmed our theoretical results and analysis, and shown that our 
IRJBD algorithm works well if there is no zero or infinite GSVD components. We have also 
illustrated that IRJBD is more efficient and often performs much better than TRJBD.

There are some important problems to be solved. As we have proved, addressed and numerically justified,
the JBD method and the resulting IRJBD and TRJBD algorithms cannot compute zero and infinite GSVD 
components. Because of the appearance of zero or infinite GSVD components,
the JBD method encountered a severe difficulty when computing other nontrivial finite largest or smallest GSVD components. How to remove the fatal effects of zero or infinite generalized singular value on
the JBD method is extremely significant and important. More generally, zero and infinite
GSVD components have similar effects on other solvers such as JD GSVDsolvers.
Certainly, this issue is
challenging and will constitute our future work.
We have also observed irregular convergence behaviors of the JBD method,
which should be due to intrinsic possible non-convergence of the standard Rayleigh--Ritz
method \cite{Jia-poly-refined,Jia-IRRBL}. As has been done for the large eigenvalue problems and SVD problems, we will pursue
a refined extraction-based JBD method and corresponding implicitly restarted algorithms in the near future.

\section*{Declarations}

The two authors declare that they have no conflict of interest, and they read and approved the final manuscript.

\end{document}